\newtheorem{prelem}{{\bf Proposition}}
\newtheorem{theorem}{Theorem}
\newtheorem{corollary}[theorem]{Corollary}
\newtheorem{observation}[theorem]{Observation}
\newtheorem{proposition}[theorem]{Proposition}
\theoremstyle{definition}
\theoremstyle{remark}
\begin{document}
\title{$k$-tuple total restrained domination and\\
$k$-tuple total restrained domatic in\\
graphs}
\author{ Adel P. Kazemi \vspace{4mm}\\
Department of Mathematics\\
University of Mohaghegh Ardabili\\
P. O. Box 5619911367, Ardabil, Iran\\
adelpkazemi@yahoo.com\vspace{3mm} \\
}
\date{}
\maketitle

\begin{abstract}
Let $G$ be a graph of order $n$ and size $m$ and let $k\geq 1$ be an
integer. A $k$-tuple total dominating set in $G$ is called a
$k$-tuple total restrained dominating set of $G$ if each vertex
$x\in V(G)-S$ is adjacent to at least $k$ vertices of $V(G)-S$. The
minimum number of vertices of a such sets in $G$ are the $k$-tuple
total restrained domination number $\gamma _{\times k,t}^{r}(G)$ of
$G$. The maximum number of classes of a partition of $V(G)$ such
that its all classes are $k$-tuple total restrained dominating sets
in $G$, is called the $k$-tuple total restrained domatic number of
$G$.

In this manuscript, we first find $\gamma _{\times k,t}^{r}(G)$,
when $G$ is complete graph, cycle, bipartite graph and the
complement of path or cycle. Also we will find bounds for this
number when $G$ is a complete multipartite graph. Then we will know
the structure of graphs $G$ which $\gamma _{\times k,t}^{r}(G)=m$,
for some $m\geq k+1$ and give upper and lower bounds for $\gamma
_{\times k,t}^{r}(G)$, when $G$ is an arbitrary graph. Next, we
mainly present basic properties of the $k$-tuple total restrained
domatic number of a graph and give bounds for it. Finally we give
bounds for the $k$-tuple total restrained domination number of the
complementary prism $G\overline {G}$ in terms on the similar number
of $G$ and $\overline {G}$ when $G$ is a regular graph or an
arbitrary graph. And then we calculate it when $G$ is cycle or path.
\end{abstract}

\textbf{Keywords :} $k$-tuple total (restrained) domination number,
$k$-tuple total (restrained) domatic number.

\textbf{2000 Mathematics subject classification :} 05C69


\section{Introduction}

The research of the domination in graphs has been an evergreen of
the graph theory. Its basic concept is the dominating set and the
domination number. A numerical invariant of a graph which is in a
certain sense dual to it is the domatic number of a graph. And many
variants of the dominating set were introduced and the corresponding
numerical invariants were defined for them. Here, we initial to
study of the $k$-tuple total restrained domination number and the
$k$-tuple total restrained domatic number.

We start with definitions of various concepts concerning the
domination in graphs. A subset $S\subseteq V(G)$ is called a
$k$-\emph{tuple total dominating set}, briefly kTDS, \cite{HK} in
$G$, if for each $x\in V(G)$, $\mid N(x)\cap S\mid \geq k$. Recall
that $1$-tuple total dominating set is known as \emph{total
dominating set}.

Let $k\geq 1$ be an integer. A $k$-tuple total dominating set in $G$
is called a $k$-\emph{tuple total restrained dominating set},
briefly kTRDS, in $G$, if each vertex $x\in V(G)-S$ is adjacent to
at least $k$ vertices of $V(G)-S$. The minimum number of vertices of
a $k$-tuple total dominating set in a graph $G$ is the
$k$-\emph{tuple total domination number} of $G$ and denoted by
$\gamma _{\times k,t}(G)$. Analogously the $k$-\emph{tuple total
restrained domination number} $\gamma _{\times k,t}^{r}(G)$ is
defined. Obviously, $\gamma _{\times k,t}(G)\leq \gamma _{\times
k,t}^{r}(G)$.

The domatic number of a graph was introduced in \cite{CH}, and the
total domatic number in \cite{CDH}. Sheikholeslami and Volkmann
extended the last definition to the $k$-tuple total domatic number
$d_{\times k,t}(G)$ in \cite{SV}. In an analogous way we will define
the $k$-tuple total restrained domatic number and then we will
discuss the purpose of defining it. Let $\mathbf{D}
$ be a partition of the vertex set $V(G)$ of $G$. If all classes of $\mathbf{%
D}$ are $k$-tuple total restrained dominating sets in $G$, then
$\mathbf{D}$ is called a $k$-\emph{tuple total restrained domatic
partition}, briefly kTRDP, of $G$. The maximum number of classes of
a $k$-tuple total restrained domatic partition of $G$ is the
$k$-\emph{tuple total restrained domatic number} $d_{\times
k,t}^r(G)$ of $G$.

Haynes et al. in \cite{HaHeSl07} defined a new type of graph product
that generalizes the concept of a cartesian product. Let $G$ and $H$
be two graphs with the vertices sets $V(G)=\{u_{i}\mid 1\leq i\leq
n\}$ and $V(H)=\{v_{j}\mid 1\leq j\leq p\}$. Let $R$ be a subset of
$V(G)$ and $S$ be a subset of $V(H)$. The \emph{complementary product}
$G(R)\square H(S)$ are defined as follows. The vertex set $G(R)\square H(S)$ is $\{(u_{i},v_{j}):1%
\leq i\leq n,1\leq j\leq p\}$. And the edge $(u_{i},v_{j})(u_{h},v_{k})$ is
in $E(G(R)\square H(S))$

1. if $i=h$, $u_{i}\in R$ and $v_{j}v_{k}\in E(H)$, or if $i=h$, $%
u_{i}\notin R$ and $v_{j}v_{k}\notin E(H)$, or

2. if $j=k$, $v_{j}\in S$ and $u_{i}u_{h}\in E(G)$, or if $j=k$, $%
v_{j}\notin S$ and $u_{i}u_{h}\notin E(G)$.

In other words, for each $u_{i}\in V(G)$, we replace $u_{i}$ with a copy of $%
H$ if $u_{i}$ is in $R$ and with a copy of its complement $\overline{H}$ if $%
u_{i}$ is not in $R$, and for each $v_{j}\in V(H)$, we replace each $v_{j}$
with a copy of $G$ if $v_{j}\in S$ and a copy of $\overline{G}$ if $%
v_{j}\notin S$. If $R=V(G)$ (respectively, $S=V(H)$), we write simply $%
G\square H(S)$ (respectively, $G(R)\square H$). Thus, $G\square H(S)$ is the
graph obtained by replacing each vertex $v$ of $H$ by a copy of $G$ if $v\in
S$ and by a copy of $\overline{G}$ if $v\notin S$, and replacing each $u_{i}$
with a copy of $H$. Therefore, the cartesian product of $G$ and $H$ is
simply $G(V(G))\square H(V(H))=G\square H $.

The \emph{complementary prism} $G\overline{G}$ of a graph $G$ is the
special complementary product $G\square K_{2}(S)$\ where $\mid S\mid
=1$. In other words the complementary prism $G\overline{G}$ of $G$
is the graph formed from the disjoint union $G\cup \overline{G}$ of
$G$ and $\overline{G}$ by adding the edges of a perfect matching
between the corresponding vertices
(same label) of $G$ and $\overline{G}$. For example, the graph $C_{5}%
\overline{C_{5}}$ is the Petersen graph. Also, if $G=K_{n}$, the graph $K_{n}%
\overline{K_{n}}$ is the corona $K_{n}\circ K_{1}$, where the
\emph{corona} $G\circ K_{1}$ of a graph $G$ is the graph obtained
from $G$ by attaching a pendant edge to each vertex of $G$.

The $k$-\emph{join} $G\circ _{k}H$ of a graph $G$ to a graph $H$ of
order at least~$k$ is the graph obtained from the disjoint union of
$G$ and $H$ by joining each vertex of $G$ to at least~$k$ vertices
of $H$.


The notation we use is as follows. Let $G$ be a simple graph with
\emph{vertex set} $V=V(G)$ and \emph{edge set} $E=E(G)$. The
\emph{order} $\mid V\mid $ and \emph{size} $\mid E\mid $ of $G$ are
respectively denoted by $n=n(G)$ and $m=m(G)$. For
every vertex $v\in V$, the \emph{open neighborhood} $N_{G}(v)$ is the set $%
\{u\in V\mid uv\in E\}$ and its \emph{closed neighborhood} is the set $%
N_{G}[v]=N_{G}(v)\cup \{v\}$. The \emph{degree} of a vertex $v\in V$ is $%
deg(v)=\mid N(v)\mid $. The \emph{minimum} and \emph{maximum degree}
of a graph $G$ are denoted by $\delta =\delta (G)$ and $\Delta
=\Delta (G)$, respectively. If every vertex of $G$ has degree $k$,
then $G$ is said to be $k$-\emph{regular}. The \emph{complement} of
a graph $G$ is denoted by $\overline{G}$ which is a
graph with $V(\overline{G})=V(G)$ and for every two vertices $v$ and $w$, $%
vw\in E(\overline{G})$\ if and only if $vw\notin E(G)$. We write
$K_{n}$ for the \emph{complete graph} of order $n$ and
$K_{n_{1},...,n_{p}}$ for the \emph{complete $p$-partite graph}.

Also we write $C_{n}$ and $P_{n}$, respectively, for a \emph{cycle}
and a \emph{path} of order $n$, in which $C_n\overline{C_n}$ and
$P_n\overline{P_n}$ denote their complementary prisms. Here we
assume that $V(C_n)=V(P_n)=\{i\mid 1\leq i\leq n\}$ and
$E(C_{n})=E(P_{n})\cup \{1n\}=\{ij\mid 1\leq i<j\leq n \mbox{ and
}j\equiv i+1 \mbox{ (mod } n) \}$. We also assume
$V(\overline{G})=\{\overline{i}\mid 1\leq i\leq n\}$, where $G$ is
$C_n$ or $P_n$, and every vertex $i$ in $G$ is adjacent to its
respective vertex $\overline {i}$ in $\overline{G}$.

This paper is organized as follows. In section 2, we present the
$k$-tuple total restrained domination number of the complete graphs,
cycles, bipartite graphs and the complement of paths or cycles. Also
we will present some bounds for the $k$-tuple total restrained
domination number of the complete multipartite graph. Then, in
section 3, we will show the structure of graphs $G$ which $\gamma
_{\times k,t}^{r}(G)=m$, for some $m\geq k+1$ and give upper and
lower bounds for $\gamma _{\times k,t}^{r}(G)$, when $G$ is an
arbitrary graph. In the next section, we mainly present basic
properties of the $k$-tuple total restrained domatic number of a
graph and give bounds for it. Also we give some sufficient
conditions for the $k$-tuple domination (resp. domatic) number of a
graph is its $k$-tuple restrained domination (resp. domatic) number.
Finally, in the last section, we give some bounds for the $k$-tuple
total restrained domination number of the complementary prism
$G\overline {G}$ in terms on the similar number of $G$ and
$\overline {G}$ when $G$ is a regular graph or an arbitrary graph.
And then we calculate it for the complementary prism of a cycle or
path.

The following observations and propositions are useful.

\begin{observation}
\label{P2} Let $G$ be a graph of order $n$ in which $\delta (G)\geq
k$. Then

\textbf{i.} every vertex of degree at most $2k-1$ of $G$ and at
least its $k$ neighbors belong to every kTRDS,

\textbf{ii.} if $\delta (G)\leq 2k-1$, then $d_{\times
k,t}^{r}(G)=1$,

\textbf{iii.}  if $\gamma _{\times k,t}^{r}(G)<n$, then $\Delta
(G)\geq 2k$, and so $n\geq 2k+2$.
\end{observation}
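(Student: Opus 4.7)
The three parts all rest on the same elementary observation: every vertex $v \notin S$ of a kTRDS must satisfy \emph{both} the total domination requirement (at least $k$ neighbors inside $S$) and the restrained requirement (at least $k$ neighbors inside $V(G)-S$), hence $\deg(v) \ge 2k$. My plan is to isolate this as a small forcing lemma in (i), and then deduce (ii) and (iii) by straightforward counting.

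For part (i), let $v$ be a vertex with $\deg(v) \le 2k-1$ and let $S$ be an arbitrary kTRDS of $G$. I would argue by contradiction: if $v \notin S$, then the kTDS condition applied to $v$ gives $|N(v) \cap S| \ge k$, while the restrained condition gives $|N(v) \cap (V(G)-S)| \ge k$; summing these disjoint contributions forces $\deg(v) \ge 2k$, contradicting the hypothesis. So $v \in S$. Once $v \in S$, a second appeal to the kTDS condition (applied to $v$ itself, which is now in $S$) yields $|N(v) \cap S| \ge k$, i.e., at least $k$ neighbors of $v$ lie in $S$ as well. This settles (i).

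For part (ii), I would first note that $V(G)$ is always a kTRDS under the hypothesis $\delta(G) \ge k$, because the kTDS condition is satisfied at every vertex (each $v$ has $\ge k$ neighbors, all of which sit in $S=V(G)$) and the restrained condition is vacuous since $V(G) - S = \emptyset$. Thus $d^{r}_{\times k,t}(G) \ge 1$. For the reverse inequality, fix any vertex $v$ with $\deg(v) \le 2k-1$, which exists by the hypothesis $\delta(G) \le 2k-1$. By (i), $v$ lies in \emph{every} kTRDS; but distinct classes of a partition are disjoint, so at most one class can contain $v$. Hence $d^{r}_{\times k,t}(G) = 1$.

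For part (iii), assume $\gamma^{r}_{\times k,t}(G) < n$, so some kTRDS $S$ is a proper subset of $V(G)$. Pick any $v \in V(G) - S$; the same double-sided count used in (i) immediately gives $\deg(v) \ge 2k$, hence $\Delta(G) \ge 2k$. To lift this to $n \ge 2k+2$, I would observe that $v$ together with its $\ge k$ neighbors inside $V(G)-S$ forces $|V(G)-S| \ge k+1$, while picking any $u \in S$ and invoking the kTDS condition on $u$ (which must have $\ge k$ neighbors in $S$) forces $|S| \ge k+1$; adding gives $n \ge 2k+2$. No serious obstacle appears in any of the three items — the entire argument is bookkeeping around the disjointness of $N(v)\cap S$ and $N(v)\cap(V(G)-S)$, and the only substantive check is that $V(G)$ itself qualifies as a kTRDS in the base case of (ii).
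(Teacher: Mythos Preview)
Your argument is correct in all three parts, and it is precisely the elementary double-count that the paper has in mind; the paper, however, records this statement as an \emph{Observation} and gives no proof at all. Your write-up therefore serves as a complete justification of what the paper leaves implicit.
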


\begin{observation}
\label{Ob.comp.Graph} Let $k<n$ be two positive integers. Then
$d_{\times k,t}^r(K_n)=\lfloor \frac{n}{k+1} \rfloor$.
\end{observation}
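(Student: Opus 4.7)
The plan is to reduce everything to a cardinality calculation, exploiting that $K_n$ is complete. The first step I would carry out is the following characterization: for a subset $S \subseteq V(K_n)$, every $v \in S$ has exactly $|S|-1$ neighbors in $S$ and every $v \in V-S$ has exactly $|V-S|-1$ neighbors in $V-S$. Hence a proper nonempty subset $S \subsetneq V(K_n)$ is a kTRDS if and only if $|S| \geq k+1$ and $|V-S| \geq k+1$; moreover $S = V(K_n)$ is always a kTRDS, because the hypothesis $n > k$ gives $|S|-1 \geq k$ while the restrained condition is vacuously satisfied.

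For the upper bound, let $\mathbf{D} = \{S_1, \ldots, S_d\}$ be a kTRDP of $K_n$. If $d = 1$, then $d \leq \lfloor n/(k+1) \rfloor$ already from $n \geq k+1$. Otherwise every $S_i$ is a proper nonempty subset, so the characterization forces $|S_i| \geq k+1$ for each $i$. Summing over $i$ yields $n \geq d(k+1)$, and therefore $d \leq \lfloor n/(k+1) \rfloor$.

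For the matching lower bound, set $d = \lfloor n/(k+1) \rfloor$ and write $n = d(k+1) + r$ with $0 \leq r \leq k$. I would partition $V(K_n)$ into $d-1$ classes of size exactly $k+1$ together with one last class of size $k+1+r$. When $d \geq 2$, each class has cardinality at least $k+1$ and its complement contains at least one other class, hence also has cardinality at least $k+1$, so the characterization certifies each class as a kTRDS. When $d = 1$, the single class is $V(K_n)$ itself, which is a kTRDS by the initial observation. In either case this exhibits a kTRDP of order $d$, matching the upper bound. I do not anticipate a substantive obstacle; the only mildly delicate point is isolating the case $d = 1$ so that the restrained condition is handled via vacuity rather than the cardinality inequality.
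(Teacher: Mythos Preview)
Your argument is correct. The characterization of kTRDS's in $K_n$ via the two cardinality constraints $|S|\ge k+1$ and $|V\setminus S|\ge k+1$ (with the full vertex set handled separately) is exactly right, and both the upper bound by summing class sizes and the explicit partition for the lower bound go through without difficulty. The case split at $d=1$ is the only place needing care, and you handle it properly.

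As for comparison: the paper offers no proof at all of this statement---it is recorded as an Observation with the argument left implicit. Your write-up supplies precisely the natural direct verification that a reader would carry out. One could alternatively invoke Theorem~\ref{TRD=TD} (which gives $d_{\times k,t}^{r}(G)=d_{\times k,t}(G)$ in general) to reduce to the non-restrained $k$-tuple total domatic number of $K_n$, but since the Observation is placed before that theorem in the paper, the self-contained argument you give is the intended route.
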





\begin{prelem}
\label{TCnCn} \emph{(\textbf{Kazemi \cite{Kaz2} 2011})} Let $n\geq 4$. Then%
\begin{equation*}
\gamma _{t}(C_{n}\overline{C_{n}})=\left\{
\begin{array}{ll}
2\left\lceil n/4\right\rceil +2 & \mbox{if }n\equiv 0\mbox{ (mod
}4\mbox{)}, \\
2\left\lceil n/4\right\rceil +1 & \mbox{if }n\equiv 3\mbox{ (mod
}4\mbox{)}, \\
2\left\lceil n/4\right\rceil & \mbox{Otherwise. }%
\end{array}%
\right.
\end{equation*}
\end{prelem}

\begin{prelem}
\label{DCnCn} \emph{(\textbf{Kazemi \cite{Kaz2} 2011})} If $n\geq 5$, then $\gamma _{\times 2,t}(C_{n}\overline{C_{n}}%
)=n+2.$
\end{prelem}

\begin{prelem}
\label{TPnPn} \emph{(\textbf{Kazemi \cite{Kaz2} 2011})} Let $n\geq 4$. Then%
\begin{equation*}
\gamma _{t}(P_{n}\overline{P_{n}})=\left\{
\begin{array}{ll}
2\left\lceil (n-2)/4\right\rceil +1 & \mbox{if }n\equiv 3\mbox{(}
\mbox{ mod
}4\mbox{)}, \\
2\left\lceil (n-2)/4\right\rceil +2 & \mbox{otherwise.}%
\end{array}%
\right.
\end{equation*}
\end{prelem}




\section{$k$-tuple total restrained domination number in some graphs}

By Observation \ref{P2}($iii$), we have $\gamma _{\times
k,t}^{r}(K_{n})=n$ if $n\leq 2k+1$. Since also every $(k+1)$-subset
of vertices is a kTRDS of $K_n$, when $n\geq 2k+2$, then we have the
next result.

\begin{proposition}
\label{Kn1} Let $k<n$ be positive integers. Then
\begin{equation*}
\gamma _{\times k,t}^{r}(K_n)=\left\{
\begin{array}{cc}
n & \mbox{if }n\leq 2k+1, \\
k+1 & \mbox{otherwise.}%
\end{array}
\right.
\end{equation*}
\end{proposition}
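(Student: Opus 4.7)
The plan is to handle the two cases separately, each flowing immediately from a simple structural observation about $K_n$.

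For the case $n \leq 2k+1$, I would appeal directly to Observation \ref{P2}(iii) in its contrapositive form: if $n < 2k+2$, then it is impossible to have $\gamma_{\times k,t}^r(G) < n$. Since $K_n$ with $n \leq 2k+1$ satisfies this inequality, we conclude $\gamma_{\times k,t}^r(K_n) \geq n$, and trivially $\gamma_{\times k,t}^r(K_n) \leq n$ since $V(K_n)$ itself is (vacuously) a kTRDS.

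For the case $n \geq 2k+2$, I would prove matching upper and lower bounds. For the upper bound, let $S$ be any subset of $V(K_n)$ with $|S|=k+1$. Since every pair of vertices in $K_n$ is adjacent, each vertex $x\in S$ has exactly $k$ neighbors in $S$, so $|N(x)\cap S|\geq k$, and each vertex $y\in V-S$ has $|N(y)\cap S|=k+1\geq k$; hence $S$ is a kTDS. Moreover $|V-S|=n-(k+1)\geq k+1$, so each $y\in V-S$ has at least $k$ neighbors inside $V-S$, verifying the restrained condition. Thus $\gamma_{\times k,t}^r(K_n) \leq k+1$. For the lower bound, any kTRDS $S$ is in particular a kTDS, so every vertex of $S$ must have at least $k$ neighbors in $S$, forcing $|S|\geq k+1$.

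Combining the two cases gives the claimed formula. There is essentially no obstacle here: the only mild point to watch is making sure the restrained condition on $V-S$ is checked in the second case (which is why the hypothesis $n\geq 2k+2$ is needed so that $|V-S|\geq k+1$), and that the case $n\leq 2k+1$ really does invoke Observation \ref{P2}(iii) correctly — namely that its conclusion $n\geq 2k+2$ is necessary whenever $\gamma_{\times k,t}^r(G)<n$.
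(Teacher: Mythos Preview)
Your proposal is correct and follows essentially the same approach as the paper: the case $n\leq 2k+1$ is handled by Observation~\ref{P2}(iii), and for $n\geq 2k+2$ any $(k+1)$-subset is exhibited as a kTRDS. You supply a bit more detail than the paper (explicitly checking the restrained condition via $|V-S|\geq k+1$ and spelling out the lower bound $|S|\geq k+1$), but the argument is the same.
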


Next three propositions present $\gamma _{\times
k,t}^{r}(\overline{C_{n}})$, $\gamma _{\times
k,t}^{r}(\overline{P_{n}})$ and $\gamma _{\times k,t}^{r}(C_{n})$.

\begin{proposition}
\label{kTRCn-} Let $n\geq k+3\geq 4$. Then%
\begin{equation*}
\gamma _{\times k,t}^{r}(\overline{C_{n}})=\left\{
\begin{array}{ll}
n & \mbox{if }n\leq 2k+2, \\
k+2 & \mbox{if }2k+3 \leq n\leq 3k+2, \\
k+1 & \mbox{if }n\geq 3k+3.%
\end{array}%
\right.
\end{equation*}
\end{proposition}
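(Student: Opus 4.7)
The plan is to translate both the kTDS and the restrained conditions into combinatorial conditions on the position of $S$ inside $C_{n}$, exploiting that in $\overline{C_{n}}$ a vertex $v$ is non-adjacent to exactly $v-1$ and $v+1$ (indices mod $n$). For any $S\subseteq V$ and any $v$ one has
\begin{equation*}
|N_{\overline{C_{n}}}(v)\cap S|=|S|-|S\cap\{v-1,v,v+1\}|,
\end{equation*}
so that $S$ is a kTDS iff $|S\cap\{v-1,v,v+1\}|\le |S|-k$ for every $v$, and the restrained condition at $v\notin S$ reads $|(V\setminus S)\cap\{v-1,v,v+1\}|\le n-|S|-k$.

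The case $n\le 2k+2$ follows immediately: $\Delta(\overline{C_{n}})=n-3\le 2k-1<2k$, so by Observation \ref{P2}(iii) we must have $\gamma_{\times k,t}^{r}(\overline{C_{n}})=n$, and $V(\overline{C_{n}})$ itself is a kTRDS since $\delta=n-3\ge k$ by hypothesis.

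For the upper bound when $n\ge 3k+3$, I would take $S$ to consist of $k+1$ vertices placed on $C_{n}$ at pairwise cyclic distance at least $3$ (possible since $3(k+1)\le n$). Then $S$ is independent in $C_{n}$ and every arc of $V\setminus S$ between two consecutive elements of $S$ has length $\ge 2$, from which both conditions follow by the identity above. For $2k+3\le n\le 3k+2$, take $|S|=k+2$; when $n\ge 2k+5$ the restrained bound $n-|S|-k\ge 3$ is automatic, so any independent set of size $k+2$ in $C_{n}$ works, while for $n\in\{2k+3,2k+4\}$ the restrained condition becomes tight and one must choose $S$ so that $V\setminus S$ is itself independent in $C_{n}$ (respectively, contains no three consecutive vertices). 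Explicit choices such as $S=\{2,4,\ldots,2k+2,2k+3\}$ for $n=2k+3$ and $S=\{1,3,\ldots,2k+3\}$ for $n=2k+4$ do the job.

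The only substantive lower bound is $\gamma_{\times k,t}^{r}(\overline{C_{n}})\ge k+2$ in the range $2k+3\le n\le 3k+2$. Suppose for contradiction that some $S$ with $|S|=k+1$ were a kTDS of $\overline{C_{n}}$. The identity above then forces $|S\cap\{v-1,v,v+1\}|\le 1$ for every $v$: applied to $v\in S$ this says $S$ is independent in $C_{n}$, and applied to $v\notin S$ it says no $v\notin S$ has both its $C_{n}$-neighbors in $S$. Equivalently, each arc of $V\setminus S$ between two consecutive members of $S$ has length at least $2$, whence $n-(k+1)\ge 2(k+1)$, i.e., $n\ge 3k+3$, contradicting $n\le 3k+2$. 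The trivial inequality $\gamma_{\times k,t}^{r}(\overline{C_{n}})\ge k+1$ handles the remaining range. The main obstacle throughout is the bookkeeping for the restrained condition in the small-$n$ upper-bound constructions, where one must place $S$ so that $V\setminus S$ is itself well-spread in $C_{n}$.
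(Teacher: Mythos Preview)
Your argument is correct and follows essentially the same route as the paper: both proofs use the fact that $N_{\overline{C_n}}(v)=V\setminus\{v-1,v,v+1\}$ to reduce the kTDS and restrained conditions to spacing constraints on $S$ along the cycle, obtain the $n\ge 3k+3$ threshold by observing that $|S|=k+1$ forces pairwise cyclic distance $\ge 3$, and handle $n\le 2k+2$ via Observation~\ref{P2}(iii). The only cosmetic differences are that the paper treats $n=2k+2$ by a separate ad hoc argument (whereas you note Observation~\ref{P2}(iii) already covers it since $\Delta=n-3\le 2k-1$), and for the middle-range upper bound the paper uses the single set $\{1,3,\dots,2k+3\}$ for all $2k+3\le n\le 3k+2$ rather than splitting off $n\in\{2k+3,2k+4\}$ as you do.
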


\begin{proof}
We first prove that $\gamma _{\times k,t}^{r}(\overline{C_{n}})=k+1$
if and only if $n\geq 3k+3$. Let $S$ be a kTRDS of
$\overline{C_{n}}$ with cardinal $k+1$. Then for every two arbitrary
vertices $\overline i$ and $\overline j$ in $S$, $\mid \overline {i}
- \overline j \mid \geq 3$, and so $n\geq 3k+3$. Since also
$\{\overline {3i+1} \mid 0\leq i\leq k\}$ is a kTRDS of
$\overline{C_{n}}$, when $n\geq 3k+3$, then $\gamma _{\times
k,t}^{r}(\overline{C_{n}})=k+1$.

Observation \ref{P2}($iii$) follows that $\gamma _{\times
k,t}^{r}(\overline{C_{n}})=n$ if and only if $k+3\leq n\leq 2k+1$.
Let now $n=2k+2$. Then $\delta (G)=\Delta (G)=n-3=2k-1$. Let $S$ be
a kTRDS of $\overline{C_{n}}$. Let $\overline i \in V-S$. Since
$\mid N(\overline i)\cap S\mid\geq k$ and $\mid N(\overline i)\cap
(V-S)\mid\geq k$, then $deg(\overline i)\geq 2k$ that is not
possible. Therefore $S=V$ and so $\gamma _{\times
k,t}^{r}(\overline{C_{n}})=n$. For the other cases, obviously
$S=\{\overline{2i + 1} \mid 0\leq i\leq k+1\}$ is a kTRDS of
$\overline{C_{n}}$ and so $\gamma _{\times
k,t}^{r}(\overline{C_{n}})=k+2$.
\end{proof}

\begin{proposition}
\label{kTRPn-} Let $n\geq k+3\geq 4$. Then
\begin{equation*}
\gamma _{t}^{r}(\overline{P_{n}})=\left\{
\begin{array}{ll}
n & \mbox{if }n=4, \\
2 & \mbox{if }n\geq 5,%
\end{array}%
\right.
\end{equation*}
and if $k\geq 2$, then
\begin{equation*}
\gamma _{\times k,t}^{r}(\overline{P_{n}})=\left\{
\begin{array}{ll}
n & \mbox{if }n\leq 2k+2, \\
k+2 & \mbox{if }2k+3 \leq n\leq 3k, \\
k+1 & \mbox{if }n\geq 3k+1.%
\end{array}%
\right.
\end{equation*}
\end{proposition}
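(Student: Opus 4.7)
The plan is to split on $n$ relative to $k$, using throughout that in $\overline{P_n}$ the vertex $i$ is non-adjacent only to $\{i-1,i,i+1\}$, so interior vertices have degree $n-3$ and the endpoints $1,n$ have degree $n-2$. I would handle $k=1$ separately. For $n=4$, vertices $2$ and $3$ of $\overline{P_4}$ each have a unique neighbor, so any TDS must contain $\{1,4\}$, and the restrained condition then forces $S=V$. For $n\geq 5$, the set $\{1,4\}$ is easily verified to be a TRDS (every $v\notin \{1,4\}$ has $1$ or $4$ as an $\overline{P_n}$-neighbor and has, e.g., $5$ as a non-$S$ neighbor), matching the trivial lower bound $\gamma_t\geq 2$.

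For $k\geq 2$ and $n\leq 2k+1$, Observation~\ref{P2}($iii$) gives $\gamma_{\times k,t}^r(\overline{P_n})=n$ at once since $\Delta=n-2\leq 2k-1<2k$. For $n=2k+2$, every interior vertex has degree exactly $2k-1$, so Observation~\ref{P2}($i$) puts all $n-2$ interior vertices into every kTRDS; an endpoint $v\notin S$ would then have at most one non-$S$ neighbor (the other endpoint), violating $|N(v)\cap(V-S)|\geq k\geq 2$, so $S=V$. The upper bounds in the remaining two ranges come from explicit constructions: $S_1=\{1,3,5,\ldots,2k+3\}$ of size $k+2$ for $2k+3\leq n\leq 3k$, and $S_2=\{1,4,7,\ldots,3k+1\}$ of size $k+1$ for $n\geq 3k+1$. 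In each case the verification reduces to computing $|N(v)\cap S|=|S|-|S\cap\{v-1,v+1\}|$ (less $1$ when $v\in S$) and $|N(v)\cap(V-S)|=\deg(v)-|N(v)\cap S|$, then checking both are $\geq k$; the estimates go through because the gaps in $S_1,S_2$ are $2$ and $3$ respectively, and because $n\geq 2k+3$ throughout.

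The matching lower bound in the middle range is the main obstacle. The key observation is that if $|S|=k+1$ is a kTDS of $\overline{P_n}$, then $\delta(G[S])\geq k$ on $k+1$ vertices forces $G[S]=K_{k+1}$, i.e.\ $S$ is independent in $P_n$ (gaps $\geq 2$); moreover, the kTDS condition at each $v\notin S$ reads $|S\cap\{v-1,v+1\}|\leq 1$, which forbids any $P_n$-gap of size exactly $2$ in $S$. Hence all gaps in $S$ are $\geq 3$, so if $S=\{i_1<\cdots<i_{k+1}\}$ then $n\geq i_{k+1}\geq 1+3k=3k+1$. This gives $\gamma_{\times k,t}^r(\overline{P_n})\geq k+2$ whenever $n\leq 3k$, and combined with the trivial bound $\gamma_{\times k,t}^r(\overline{P_n})\geq \gamma_{\times k,t}(\overline{P_n})\geq k+1$ settles the last range as well.
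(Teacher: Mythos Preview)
Your argument is correct and follows essentially the same line as the paper's, with one notable variation worth pointing out. For the lower bound in the middle range you argue exactly as the paper does (a kTDS of size $k+1$ must induce $K_{k+1}$ in $\overline{P_n}$, hence be independent in $P_n$, and the kTDS condition at the midpoint of a gap of size $2$ fails, forcing all gaps $\geq 3$ and thus $n\geq 3k+1$); likewise your treatment of $n\leq 2k+2$ via Observation~\ref{P2} matches the paper's degree argument. Where you differ is the upper bound for $2k+3\leq n\leq 3k$: you exhibit the explicit set $S_1=\{1,3,\ldots,2k+3\}$ and verify it directly, whereas the paper instead observes that $\overline{C_n}$ is a spanning subgraph of $\overline{P_n}$ and invokes Proposition~\ref{kTRCn-} to get $\gamma_{\times k,t}^r(\overline{P_n})\leq \gamma_{\times k,t}^r(\overline{C_n})=k+2$. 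Your approach is more self-contained; the paper's is shorter once Proposition~\ref{kTRCn-} is in hand. (One tiny cosmetic slip: in the $k=1$, $n\geq 5$ case your suggested witness ``$5$'' for a non-$S$ neighbor fails when $v\in\{5,6\}$; use $2$ or $3$ there instead.)
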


\begin{proof}
One can verify that $\gamma _{t}^{r}(\overline{P_{n}})$ is $2$ if
and only if $n\geq 5$, and otherwise is $n$. Let now $k\geq 2$. It
can be easily verify that $\gamma _{\times
k,t}^{r}(\overline{P_{n}})=k+1$ if and only if there exists a kTRDS
$S$ of $\overline{P_{n}}$ such that for every two disjoint vertices
$\overline i$ and $\overline j$ in $S$, the difference between
$\overline {i}$ and $\overline j$ to modulo $n$ is at least 3 or
$\{\overline {i},\overline j\}=\{\overline {1},\overline n\}$. And
this is equivalent to $n\geq 3k+1$. Since $S=\{\overline{3i+1} \mid
0\leq i\leq k-1\}\cup \{\overline n\}$ is a kTRDS of
$\overline{P_{n}}$, for $n\geq 3k+1$, then $\gamma _{\times
k,t}^{r}(\overline{P_{n}})=k+1$. Let now $n=k+i\leq 3k$, and let $S$
be a kTRDS of $\overline{P_{n}}$. For every vertex $x$ in $V-S$,
$deg(x)\geq n-1-\mid S \mid \geq n-k-3=i-3$. Since also $deg(x)\geq
k$, then $i\geq k+3$. Hence $\gamma _{\times
k,t}^{r}(\overline{P_{n}})=n$ if $n\leq 2k+2$. Let now $2k+3\leq
n\leq 3k$. Since $\overline {C_n}$ is a spanning subgraph of
$\overline {P_n}$, then $\gamma _{\times
k,t}^{r}(\overline{P_{n}})\leq \gamma _{\times
k,t}^{r}(\overline{C_{n}})=k+2$, by Proposition \ref{kTRCn-}. Now
$\gamma _{\times k,t}^{r}(\overline{P_{n}})>k+1$ follows $\gamma
_{\times k,t}^{r}(\overline{P_{n}})=k+2$.
\end{proof}

\begin{proposition}
\label{TRCn} Let $n\geq 4$. Then $\gamma _{\times 2,t}^{r}(C_n)=n$ and%
\begin{equation*}
\gamma _{t}^{r}(C_{n})=\left\{
\begin{array}{ll}
2\left\lceil n/4\right\rceil -1 & \mbox{if }n\equiv 1\mbox{ (mod
}4\mbox{)}, \\
2\left\lceil n/4\right\rceil +1 & \mbox{if }n\equiv 3\mbox{ (mod
}4\mbox{)}, \\
2\left\lceil n/4\right\rceil & \mbox{Otherwise. }%
\end{array}%
\right.
\end{equation*}
\end{proposition}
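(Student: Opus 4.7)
The identity $\gamma _{\times 2,t}^{r}(C_n)=n$ is immediate from $2$-regularity: if $v\notin S$ then $v$ would need at least $2$ neighbors in $S$ and at least $2$ in $V-S$, exceeding its degree of $2$. Hence $V-S=\emptyset$, and conversely $S=V$ is itself a $2$TRDS since every vertex has both its cycle-neighbors in $S$ and the restrained condition is vacuous.

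For $\gamma_{t}^{r}(C_n)$ my plan rests on a structural description of any total restrained dominating set $S$ of $C_n$ with $S\neq V$. Walking once around the cycle decomposes $V(C_n)$ into maximal alternating arcs (``blocks'') of $S$- and $(V-S)$-vertices. A $(V-S)$-block of length $1$ has its lone vertex adjacent only to $S$, violating the restrained condition; a $(V-S)$-block of length $\geq 3$ contains an interior vertex with both neighbors in $V-S$ and thus no neighbor in $S$, violating total domination. Hence every $(V-S)$-block has length exactly $2$. Similarly an $S$-block of length $1$ contains a single $S$-vertex with no $S$-neighbor, so every $S$-block has length $\geq 2$.

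Letting $t$ denote the common number of blocks of each type, this gives $n=|S|+2t$ and $|S|\geq 2t$, whence $t\leq\lfloor n/4\rfloor$ and $|S|\geq n-2\lfloor n/4\rfloor$. A direct calculation in each residue class $n\bmod 4$ identifies this lower bound with the claimed formula. The matching upper bound comes from an explicit construction with $t=\lfloor n/4\rfloor$: place $t$ disjoint $(V-S)$-blocks of size $2$ around the cycle separated by $S$-blocks, with all $S$-blocks of size $2$ except a single one inflated to size $2+(n-4\lfloor n/4\rfloor)\in\{2,3,4,5\}$ to absorb the residue. The only real content is the block-length lemma; everything else is elementary optimization, and I would sanity-check the small cases $n\in\{4,5,6,7\}$ directly against the formula to guard against off-by-one slips from the ceiling function.
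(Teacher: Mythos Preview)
Your argument is correct, and the block-decomposition lemma (every $(V\setminus S)$-block has length exactly $2$, every $S$-block has length at least $2$) is a clean way to obtain the lower bound $|S|\geq n-2\lfloor n/4\rfloor$ directly; one easily checks this equals the stated piecewise formula in each residue class, and your construction with $t=\lfloor n/4\rfloor$ blocks realizes it.

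This is a genuinely different route from the paper's. The paper instead quotes the known value of $\gamma_t(C_n)$, uses the trivial inequality $\gamma_t^r\geq\gamma_t$, and then exhibits explicit total restrained dominating sets of size $\gamma_t(C_n)$ when $n\equiv 0,1,2\pmod 4$; for $n\equiv 3\pmod 4$ it asserts (without details) that $\gamma_t^r(C_n)\geq\gamma_t(C_n)+1$ and supplies a set of that size. The paper's approach is shorter because it outsources the lower bound to the known total domination number, but it leaves the $n\equiv 3$ lower bound as an exercise. Your structural argument is self-contained, handles all four residue classes uniformly, and in particular makes the $n\equiv 3$ case transparent: with $(V\setminus S)$-blocks forced to size $2$ and $S$-blocks of size at least $2$, one simply cannot fit $\lceil n/4\rceil$ block-pairs into $n=4q+3$ vertices, so $t\leq q$ and $|S|\geq n-2q=2q+3=2\lceil n/4\rceil+1$. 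The trade-off is that the paper's proof is a couple of lines given the cited formula for $\gamma_t(C_n)$, whereas yours proves slightly more from scratch.
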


\begin{proof}
It is trivial that $\gamma _{\times 2,t}^{r}(C_n)=n$. We note that
\begin{equation*}
\gamma _{t}(C_{n})=\left\{
\begin{array}{ll}
2\left\lceil n/4\right\rceil -1 & \mbox{if }n\equiv 1\mbox{ (mod
}4\mbox{)}, \\
2\left\lceil n/4\right\rceil & \mbox{Otherwise. }%
\end{array}%
\right.
\end{equation*}%
If $n\equiv 0,1,2\mbox{ (mod }4\mbox{)}$, since the corresponding
sets $S_{0}=\{2+4i,3+4i\mid 0\leq i\leq \left\lfloor
n/4\right\rfloor -1\}$, $S_{1}=S_{0}\cup \{n-1\}$ and
$S_{2}=S_{0}\cup \{1,n-2\}$\ are total restrained dominating sets
with
cardinal $\gamma _{t}(C_{n})$, then we have proved proposition, when $%
n\not\equiv 3\mbox{ (mod }4\mbox{)}$. Let now $n\equiv 3\mbox{ (mod
}4\mbox{)}$. Then it can be easily verify that $\gamma
_{t}^{r}(C_{n})\geq \gamma _{t}(C_{n})+1$, and since
$S_{3}=S_{0}\cup \{1,n-3,n\}$\ is a total
restrained dominating set of $C_n$ with cardinal $\gamma _{t}(C_{n})+1$, then $%
\gamma _{t}^{r}(C_{n})=2\left\lceil n/4\right\rceil +1$.
\end{proof}

Now we present the $k$-tuple total restrained domination number of
the bipartite graphs.

\begin{proposition}
\label{bipartite} Let $G$ be a bipartite graph with $\delta (G)\geq
k\geq 1$. Then $2k\leq \gamma _{\times k,t}^r(G)\leq n$. Moreover,
if $X$ and $Y$ are the bipartite sets of $G$, then $\gamma _{\times
k,t}^r(G)=2k$ if and only if there exist two $k$-subsets $S\subseteq
X$ and $T\subseteq Y$ such that for each vertex $x\in X$,
$N(x)\supseteq T$, and for each vertex $y\in Y$, $N(y)\supseteq S$
and the minimum degree of the induced subgraph $G[(X-S)\cup (Y-T)]$
is at least $k$.
\end{proposition}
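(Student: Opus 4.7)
The plan is to handle the three assertions—lower bound, upper bound, and equality characterization—in turn, with bipartiteness being the single structural tool that makes everything go through.

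First I would establish the lower bound $\gamma_{\times k,t}^{r}(G)\geq 2k$. Let $D$ be any $k$TRDS of $G$ and pick any $x\in X$. Because $D$ is in particular a $k$TDS we have $|N(x)\cap D|\geq k$, and since $G$ is bipartite $N(x)\subseteq Y$, so $|D\cap Y|\geq k$. Applying the same argument to any $y\in Y$ gives $|D\cap X|\geq k$, so $|D|\geq 2k$. The upper bound $\gamma_{\times k,t}^{r}(G)\leq n$ is immediate: the hypothesis $\delta(G)\geq k$ ensures that $V(G)$ itself is a $k$TDS, and the restrained condition is vacuous because $V(G)-V(G)=\emptyset$.

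For the forward direction of the characterization, assume $\gamma_{\times k,t}^{r}(G)=2k$ and let $D$ be a minimum $k$TRDS. The bound argument above forces $|D\cap X|=|D\cap Y|=k$, so setting $S=D\cap X$ and $T=D\cap Y$ gives two $k$-subsets of the correct form. For any $x\in X$ we have $N(x)\cap D\subseteq N(x)\cap Y\cap D=N(x)\cap T$, and $|N(x)\cap D|\geq k=|T|$ forces $T\subseteq N(x)$; symmetrically $S\subseteq N(y)$ for every $y\in Y$. The restrained condition applied to $v\in V-D=(X-S)\cup(Y-T)$ requires $|N(v)\cap(V-D)|\geq k$, which is exactly the statement $\delta(G[(X-S)\cup(Y-T)])\geq k$.

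For the converse, given such $S$ and $T$ of size $k$, the set $D=S\cup T$ has $|D|=2k$. The condition $T\subseteq N(x)$ for $x\in X$ and $S\subseteq N(y)$ for $y\in Y$ shows $|N(v)\cap D|\geq k$ for every $v\in V(G)$, so $D$ is a $k$TDS; the degree hypothesis on $G[(X-S)\cup(Y-T)]$ supplies the restrained condition, so $D$ is a $k$TRDS of size $2k$. Combined with the lower bound this forces equality. I do not foresee any genuine obstacle here; the argument is essentially bookkeeping once one uses $N(X)\subseteq Y$ and $N(Y)\subseteq X$, and the only point that needs care is converting the inclusion $N(x)\cap D\subseteq T$ into the reverse inclusion $T\subseteq N(x)$ via the size equality $|T|=k\leq|N(x)\cap D|$.
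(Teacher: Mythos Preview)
Your proposal is correct and follows essentially the same route as the paper's proof: both establish the lower bound by intersecting a $k$TRDS with $N(x)\subseteq Y$ and $N(y)\subseteq X$, and both handle the equality characterization by splitting a minimum set $D$ as $(D\cap X)\cup(D\cap Y)$ and reading off the neighborhood and restrained conditions. Your write-up is in fact slightly more explicit than the paper's in justifying the inclusion $T\subseteq N(x)$ via the size equality and in stating why $V(G)$ is itself a $k$TRDS.
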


\begin{proof}
Let $D$ be a $\gamma_{\times k,t}^r(G)$-set, and let $w\in X$ and
$z\in Y$ be two arbitrary vertices. The definition implies that
$\mid D\cap N(w)\mid\geq k$ and $\mid D\cap N(z)\mid\geq k$. Since
$N(w) \cap N(z)=\emptyset$, we deduce that $\mid D\mid \geq 2k$ and
thus $2k \leq \gamma _{\times k,t}^r(G)\leq n$. If there exist two
$k$-subsets $S\subseteq X$ and $T\subseteq Y$ such that for each
vertex $x\in X$, $N(x)\supseteq T$, and for each vertex $y\in Y$,
$N(y)\supseteq S$ and also the minimum degree of the induced
subgraph $G[(X-S)\cup (Y-T)]$ is at least $k$, then obviously
$D=S\cup T$ is a $k$-tuple total restrained dominating set of $G$.
This implies $\gamma _{\times k,t}^r(G)\leq 2k$ and so $\gamma
_{\times k,t}^r(G)=2k$.

Conversely, assume that $\gamma _{\times k,t}^r(G)=2k$, and let $D$
be a $\gamma _{\times k,t}^r(G)$-set. It follows that
\begin{equation*}
\mid D\cap X\mid=\mid D\cap Y\mid=k.
\end{equation*}
Now let $S=D\cap X$ and $T=D\cap Y$.  Then $T\subseteq N(x)$ for
each vertex $x\in X$ and $S\subseteq N(y)$ for each vertex $y\in Y$.
Now if $\mid X\mid >k$ and $\mid Y\mid >k$, then, by the definition,
$\delta (G[(X-S)\cup (Y-T)])\geq k$ and the proof is complete.
\end{proof}

\begin{corollary}
\label{comp.bipartite} Let $G=K_{n,m}$ be a complete bipartite graph
with $n\geq m \geq k\geq 1$. Then
\begin{equation*}
\gamma _{\times k,t}^{r}(G)=\left\{
\begin{array}{ll}
2k & \mbox{if }n\geq m\geq 2k, \\
n+m & \mbox{otherwise. }
\end{array}%
\right.
\end{equation*}
\end{corollary}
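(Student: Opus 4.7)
The plan is to deduce both cases from Proposition~\ref{bipartite} applied to the bipartition $X,Y$ of $K_{n,m}$ with $|X|=n$ and $|Y|=m$.

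For the case $n\geq m\geq 2k$, I would verify directly the three conditions in Proposition~\ref{bipartite}. Pick any $k$-subsets $S\subseteq X$ and $T\subseteq Y$. Because every $X$--$Y$ pair is an edge of $K_{n,m}$, we automatically have $N(x)\supseteq T$ for every $x\in X$ and $N(y)\supseteq S$ for every $y\in Y$. The induced subgraph $G[(X-S)\cup (Y-T)]$ is isomorphic to $K_{n-k,\,m-k}$, whose minimum degree is $\min(n-k,m-k)=m-k\geq k$, so the third hypothesis holds. By Proposition~\ref{bipartite} we conclude $\gamma_{\times k,t}^{r}(G)=2k$. (Note that the lower bound $2k$ is already supplied by Proposition~\ref{bipartite}.)

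For the ``otherwise'' case, observe that under the standing assumption $n\geq m\geq k$, ``otherwise'' means $k\le m<2k$. I would show that the only kTRDS is $V(G)$ itself. Let $S$ be a kTRDS of $K_{n,m}$ and suppose, for contradiction, that there exists $x\in X\setminus S$. Since $N(x)\subseteq Y$, the two kTRDS conditions $|N(x)\cap S|\geq k$ and $|N(x)\cap (V-S)|\geq k$ force $|Y\cap S|\geq k$ and $|Y\setminus S|\geq k$, hence $m\geq 2k$, contradicting $m<2k$. Therefore $X\subseteq S$. Now if some $y\in Y\setminus S$ existed, then $y$ would need $k$ neighbors in $V-S$; but $N(y)\subseteq X\subseteq S$, so $y$ has no neighbor in $V-S$, a contradiction (since $k\geq 1$). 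Thus $Y\subseteq S$ as well, so $S=V(G)$ and $\gamma_{\times k,t}^{r}(G)=n+m$.

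No step is really hard; the only care needed is in the second case to make sure both $X\setminus S$ and $Y\setminus S$ are ruled out in the correct order (first $X$, which then forces $Y$), and to remember that Proposition~\ref{bipartite} is stated under the hypothesis $\delta(G)\geq k$, which holds here since $\delta(K_{n,m})=m\geq k$.
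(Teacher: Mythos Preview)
Your proposal is correct and follows the approach the paper intends: the corollary is stated without proof immediately after Proposition~\ref{bipartite}, so the paper is implicitly invoking that proposition, and you carry out exactly that deduction for the case $m\geq 2k$. For the ``otherwise'' case ($k\leq m<2k$) the paper gives no details, and your direct argument---forcing $X\subseteq S$ from the degree constraint on vertices of $X$, then forcing $Y\subseteq S$ because $N(y)\subseteq X\subseteq S$---is the natural completion and is sound.
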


Now we present some bounds for $\gamma _{\times k,t}^{r}(G)$, where
$G=K_{n_{1},...,n_{p}}$ is a complete multipartite graph and $p\geq
3$.

\begin{proposition}
\label{Kn,...,m1} Let $G=K_{n_{1},...,n_{p}}$ be the complete
$p$-partite graph of order $n$. If $\gamma _{\times k,t}^{r}(G)<n$,
then
\begin{equation*}
\lceil \frac{kp}{p-1}\rceil \leq \gamma _{\times k,t}^{r}(G)\leq
n-k.
\end{equation*}
\end{proposition}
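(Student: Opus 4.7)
The plan is to prove the two bounds separately by examining any kTRDS of $G$, using the structure that in a complete multipartite graph the non-neighbors of a vertex are exactly its own partite set.

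For the lower bound, I would argue using only the $k$-tuple total domination condition (the restrained part is not needed). Let $V_1,\ldots,V_p$ be the partite classes, and let $S$ be any kTRDS of $G$. For any vertex $v\in V_i$, we have $N(v)=V\setminus V_i$, so $|N(v)\cap S|=|S|-|S\cap V_i|$, and the condition $|N(v)\cap S|\geq k$ forces
\[
|S\cap V_i|\;\leq\;|S|-k\quad\text{for every }i=1,\ldots,p.
\]
Summing these $p$ inequalities and using $|S|=\sum_{i=1}^p|S\cap V_i|$ yields $|S|\leq p(|S|-k)$, hence $|S|\geq \frac{kp}{p-1}$. As $|S|$ is an integer, $|S|\geq \lceil kp/(p-1)\rceil$.

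For the upper bound, I would invoke the hypothesis $\gamma_{\times k,t}^r(G)<n$ and use the restrained condition. Let $S$ be a $\gamma_{\times k,t}^r(G)$-set; then $V\setminus S\neq\emptyset$ since $|S|<n$. Pick any $v\in V\setminus S$. By the restrained property, $v$ has at least $k$ neighbors inside $V\setminus S$, so $V\setminus S$ contains $v$ together with at least $k$ further vertices. Thus $|V\setminus S|\geq k+1$, which gives $|S|\leq n-k-1\leq n-k$.

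There is no substantive obstacle: both inequalities are essentially one-line counting arguments once one observes that in a complete multipartite graph every vertex in a part is adjacent to every vertex outside that part. The only mild care needed is the ceiling in the lower bound (immediate, since $|S|\in\mathbb{Z}$) and the explicit use of the hypothesis $\gamma_{\times k,t}^r(G)<n$ to ensure $V\setminus S$ is nonempty so that the restrained condition actually bites for the upper bound.
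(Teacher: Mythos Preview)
Your proof is correct. The lower bound argument is essentially identical to the paper's: both sum the inequalities $|S|-|S\cap V_i|\geq k$ over $i=1,\ldots,p$ to obtain $(p-1)|S|\geq pk$.

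For the upper bound you take a different and more elementary route than the paper. You observe that any vertex $v\in V\setminus S$ together with its $k$ neighbors in $V\setminus S$ already forces $|V\setminus S|\geq k+1$, hence $|S|\leq n-k-1\leq n-k$; this argument uses nothing about the multipartite structure and in fact works for any graph with $\gamma_{\times k,t}^{r}(G)<n$. The paper instead introduces $t(S)$, the number of parts not entirely contained in $S$, and for a vertex $w_j\in X_j\setminus S$ sums the restrained inequalities over all such parts to obtain
\[
|S|\;\leq\; n-k-\Bigl\lceil\frac{k}{t(S)-1}\Bigr\rceil.
\]
This extra work buys a sharper bound (recorded separately as the next proposition in the paper), which your shortcut does not yield. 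For the stated proposition, however, your argument is perfectly sufficient and in fact cleaner.
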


\begin{proof}
We assume that $G$ has vertex partition $V=X_{1}\cup ...\cup X_{p}$
such that $\mid X_{i}\mid =n_{i}$ and $n=n_{1}+...+n_{p}$. Let $S$
be an arbitrary kTRDS of $G$. Since every vertex of $X_{i}$\ is
adjacent to at least $k$ vertices of
$S-X_{i}=\bigcup\limits_{j=1,j\neq i}^{p}S_{j}$, then
\begin{equation*}
\sum\limits_{j=1}^{p}s_{j}-s_{i}\geq k
\end{equation*}
for each $1\leq i\leq p$, and hence $(p-1)\mid S\mid \geq pk$ that follows $%
\mid S\mid \geq \lceil \frac{pk}{p-1}\rceil $. Since $S$ was arbitrary,
therefore $\gamma _{\times k,t}^{r}(G)\geq \lceil \frac{pk}{p-1}\rceil $.

For proving the another inequality, we use the following definitions
and notations. Let $S$ be a kTRDS of $G$ and let $S_{i}=X_{i}\cap
S$, $S_{i}^{\prime }=X_{i}-S$ and $\mid S_{i}\mid =s_{i}$. Let also
$t(S)$ be the number of $i$ s that $s_{i}<n_{i}$ and let
\begin{equation*}
t_{0}=\min \{t(S)\mid S\mbox { is a kTRDS of }G\}.
\end{equation*}
We may assume that $t(S)\geq 1$. Because $t_{0}=0$\ if and only if
$\gamma _{\times k,t}^{r}(K_{n_{1},...,n_{p}})=n$. Then obviously
$t(S)\geq 2$. Without less of generality, we may assume that
$s_{i}<n_{i}$ if and only if $1\leq i\leq t(S)$. Let $w_{j}\in
X_{j}-S=X_{j}-S_{j}$, for each $1\leq j\leq t(S)$. Then $\mid
N(w_{j})\cap (V-S)\mid \geq k$, since $S$ is a kTRDS. Since also $%
N(w_{j})\cap (V-S)=$ $\bigcup\limits_{i=1,i\neq
j}^{t(S)}N(w_{j})\cap S_{i}^{\prime }$, then for each $1\leq j\leq
t$ we have:
\begin{equation*}
\begin{array}{lll}
k & \leq & \mid N(w_{j})\cap (V-S)\mid \\
& = & \sum\limits_{i=1,i\neq j}^{t(S)}\mid N(w_{j})\cap S_{i}^{\prime }\mid \\
& = & \sum\limits_{i=1,i\neq j}^{t(S)}\mid S_{i}^{\prime }\mid \\
& = & \sum\limits_{i=1}^{t(S)}\mid S_{i}^{\prime }\mid -\mid
S_{j}^{\prime
}\mid .%
\end{array}%
\end{equation*}
By summing the inequalities we have
\begin{equation*}
t(S)k\leq
(t(S)-1)\sum\limits_{i=1}^{t(S)}(n_{i}-s_{i})=(t(S)-1)\sum%
\limits_{i=1}^{p}(n_{i}-s_{i})=(t(S)-1)(n-\mid S\mid )
\end{equation*}
and hence $\mid S\mid \leq n-k-\lceil \frac{k}{t(S)-1}\rceil $.
Since $S$ was arbitrary, then
\begin{equation*}
\gamma _{\times k,t}^{r}(G)\leq n-k-\lceil \frac{k}{t_{0}-1}\rceil
\leq n-k.
\end{equation*}
\end{proof}

If we look at closer to the proof of Proposition \ref{Kn,...,m1} we
have the next result.

\begin{proposition}
\label{Kn,...,m12} Let $G=K_{n_{1},...,n_{p}}$ be the complete
$p$-partite graph of order $n$. If $\gamma _{\times k,t}^{r}(G)<n$,
then $\gamma _{\times k,t}^{r}(G)\leq n-k-\lceil
\frac{k}{t_{0}-1}\rceil $.
\end{proposition}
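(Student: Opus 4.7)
The plan is to exploit an inequality that is already derived — but not explicitly stated — inside the proof of Proposition \ref{Kn,...,m1}. In that proof one shows, for every kTRDS $S$ of $G$ with $t(S)\geq 2$, that summing the bound $k\leq\mid N(w_j)\cap(V-S)\mid$ over $j=1,\dots,t(S)$ gives $t(S)\,k\leq (t(S)-1)(n-\mid S\mid)$, which rearranges to the pointwise estimate
\[
\mid S\mid\leq n-k-\left\lceil\frac{k}{t(S)-1}\right\rceil.
\]
The statement of Proposition \ref{Kn,...,m1} weakens this to $n-k$ by dropping the ceiling term; here we simply want to keep it.

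The proof would then proceed in three short steps. First, by the definition of $t_0$ as the minimum of $t(S)$ over all kTRDSs $S$ of $G$, and since the minimum of a set of nonnegative integers is attained, there exists a kTRDS $S^{\ast}$ with $t(S^{\ast})=t_0$. The hypothesis $\gamma_{\times k,t}^{r}(G)<n$ ensures that some kTRDS has $t(S)\geq 1$, and the standard observation (a vertex outside a kTRDS must have at least $k$ neighbors outside) reused from the earlier proof forces $t_0\geq 2$, so that $\lceil k/(t_0-1)\rceil$ is well-defined. Second, applying the pointwise inequality above to $S^{\ast}$ gives
\[
\mid S^{\ast}\mid\leq n-k-\left\lceil\frac{k}{t_0-1}\right\rceil.
\]
Third, since $S^{\ast}$ is a kTRDS, $\gamma_{\times k,t}^{r}(G)\leq\mid S^{\ast}\mid$, and the desired bound follows.

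There is no real obstacle; the only conceptual point is to apply the previous proof's inequality to a kTRDS chosen so that $t(S^{\ast})=t_0$, rather than to an arbitrary $\gamma$-set. This is legitimate precisely because the key inequality was derived for every kTRDS, not just the minimum one, so we are free to plug in whichever kTRDS makes the right-hand side smallest — namely one realizing $t_0$.
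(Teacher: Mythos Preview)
Your proposal is correct and is exactly the approach the paper takes: the paper's ``proof'' of Proposition~\ref{Kn,...,m12} consists solely of the sentence ``If we look at closer to the proof of Proposition~\ref{Kn,...,m1} we have the next result,'' i.e., it simply points to the intermediate inequality $\mid S\mid\leq n-k-\lceil k/(t(S)-1)\rceil$ derived there and applies it with $t(S)=t_0$. Your write-up makes the logical step explicit (choose a kTRDS realizing $t_0$, then use $\gamma_{\times k,t}^{r}(G)\leq\mid S^{\ast}\mid$), which is precisely the content the paper leaves to the reader.
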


\section{bounds for $k$-tuple total restrained domination number}

In this section, we first give a necessary and sufficient condition
for $\gamma _{\times k,t}^r(G)=m$, for some $m\geq k+1$, and then
present some lower and upper bounds for $\gamma _{\times k,t}^{r}(G)$\ in terms on $%
k $, $n$ and $m$.

\begin{theorem}
\label{kTDm} Let $G$ be a graph with $\delta (G)\geq k$. Then for any integer $m\geq k+1$%
, $\gamma _{\times k,t}^r(G)=m$ if and only if $G=K_{m}^{\prime }$ or $%
G=F\circ _{k}K_{m}^{\prime },$ for some graph $F$ and some spanning subgraph $%
K_{m}^{\prime }$\ of $K_{m}$\ with $\delta (F)\geq k$ and $\delta
(K_{m}^{\prime })\geq k$  such that $m$ is minimum in the set
\begin{equation}
\{t\mid G=F\circ _{k}K_{t}^{\prime },\mbox{ for some }F%
\mbox{ and some spanning subgraph }K_{t}^{\prime }\mbox{ of }K_{t}
\mbox{ with } \delta (F)\geq k, \mbox { } \delta (K_{t}^{\prime
})\geq k\}.
\end{equation}%
\end{theorem}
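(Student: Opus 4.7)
The plan is to establish a clean correspondence between kTRDS of $G$ and decompositions $G = F \circ_k K'$ (allowing $F$ possibly empty, which accommodates the alternative $G = K'$), where $V(K')$ plays the role of the kTRDS. Once this correspondence is in place, it will follow immediately that $\gamma_{\times k,t}^r(G)$ equals the minimum $t$ in the set displayed in (1), and both directions of the theorem reduce to reading off definitions.

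For the forward direction, I would fix a $\gamma_{\times k,t}^r(G)$-set $S$ of size $m$ and set $K_m' = G[S]$ and $F = G[V(G) \setminus S]$. Because $S$ is in particular a kTDS, every vertex of $S$ has at least $k$ neighbors inside $S$, so $K_m'$ is a spanning subgraph of $K_m$ with $\delta(K_m') \geq k$. If $V(G) = S$ then $G = K_m'$, matching the first alternative. Otherwise, the restrained condition $|N(v) \cap (V(G) \setminus S)| \geq k$ for $v \notin S$ yields $\delta(F) \geq k$, and the kTDS condition gives each $v \in V(G) \setminus S$ at least $k$ neighbors in $S = V(K_m')$; by definition of the $k$-join this says $G = F \circ_k K_m'$. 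Thus $m$ belongs to the set in (1).

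For the reverse direction and the minimality claim, I would show that every $t$ in the set (1) is in fact the cardinality of some kTRDS of $G$, so that $\gamma_{\times k,t}^r(G) \leq \min A$ where $A$ denotes the set in (1); combined with the inclusion from the forward direction, this gives $\gamma_{\times k,t}^r(G) = \min A$. Indeed, given any $G = F \circ_k K_t'$ satisfying the degree conditions, take $S' = V(K_t')$: vertices of $S'$ have $\geq k$ neighbors in $S'$ by $\delta(K_t') \geq k$; vertices of $V(F) = V(G) \setminus S'$ have $\geq k$ neighbors in $S'$ by the $k$-join and $\geq k$ neighbors in $V(G) \setminus S'$ by $\delta(F) \geq k$. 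So $S'$ is a kTRDS of size $t$, forcing $t \geq \gamma_{\times k,t}^r(G)$. Consequently $\gamma_{\times k,t}^r(G) = m$ if and only if $m = \min A$, which is exactly the condition in the theorem.

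There is no serious obstacle; the only mildly delicate bookkeeping is reconciling the case $V(G) = S$ (equivalently, $F$ empty) with the separately stated alternative $G = K_m'$. Once this is absorbed into the $k$-join language, the entire proof amounts to translating the definitions of kTDS, the restrained condition, and the $k$-join back and forth.
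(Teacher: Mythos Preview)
Your proposal is correct and follows essentially the same approach as the paper: both arguments hinge on the observation that a set $S$ is a kTRDS of $G$ precisely when $G[S]$ has minimum degree $\geq k$, $G[V\setminus S]$ has minimum degree $\geq k$, and every vertex outside $S$ has $\geq k$ neighbors in $S$, i.e., $G = G[V\setminus S]\circ_k G[S]$. Your packaging via the identity $\gamma_{\times k,t}^r(G)=\min A$ is slightly cleaner than the paper's contradiction argument for minimality, but the underlying content is identical.
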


\begin{proof}
Let $S$ be a $\gamma _{\times k,t}^r(G)$-set and $\gamma _{\times
k,t}^r(G)=m$, for some $m\geq k+1$. Then, $\mid S\mid =m$, and every
vertex has at least $k$ neighbors in $S$, and also every vertex in
$V-S$ has at least $k$ neighbors in $V-S$. Then $G[S]=K_{m}^{\prime
}$, for some spanning subgraph $K_{m}^{\prime }$ of $K_{m}$ with
$\delta (K_{m}^{\prime })\geq k$. If $\mid V\mid =m$, then
$G=K_{m}^{\prime }$. If $\mid V\mid >m$, then let $F$ be the induced
subgraph $G[V-S]$. Then $\delta (F)\geq k$ and
$G=F\circ _{k}K_{m}^{\prime }$. Also by the definition of $k$%
-tuple total restrained domination number, $m$ is the minimum of the
set given in (1).

Conversely, let $G=K_{m}^{\prime }$ or $G=F\circ _{k}K_{m}^{\prime
},$ for some graph $F$ with $\delta (F)\geq k$ and some spanning
subgraph $K_{m}^{\prime }$\ of $K_{m}$\ with $\delta (K_{m}^{\prime
})\geq k$ such that $m$ is the minimum of the set given in (1).
Then, since $V(K_{m}^{\prime })$ is a kTRDS of $G$ with cardinal
$m$, $\gamma _{\times k,t}^r(G)\leq m$. If $\gamma _{\times
k,t}^r(G)=m^{\prime }<m$, then the previous paragraph concludes that
for some graph $F^{\prime }$ with $\delta (F^{\prime })\geq k$ and
some spanning subgraph $K_{m^{\prime }}^{\prime }$\ of $K_{m^{\prime
}}$ with $\delta (K_{m^{\prime }}^{\prime })\geq k$, $G=F^{\prime
}\circ _{k}K_{m^{\prime }}^{\prime }$, that is contradiction with
the minimality of $m$. Therefore $\gamma_{\times k,t}^r(G)=m$.
\end{proof}

\begin{corollary}
\label{kTD} Let $G$ be a graph with $\delta (G)\geq k$. Then $\gamma
_{\times k,t}^r(G)=k+1$
if and only if $G=K_{k+1}$ or $%
G=F\circ _{k}K_{k+1},$ for some graph $F$ with $\delta (F)\geq k$.
\end{corollary}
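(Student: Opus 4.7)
The plan is to derive this corollary as the $m=k+1$ specialization of Theorem \ref{kTDm}. The forward direction is immediate from the theorem: if $\gamma_{\times k,t}^r(G)=k+1$, then $G=K_{k+1}'$ or $G=F\circ_k K_{k+1}'$ for some spanning subgraph $K_{k+1}'$ of $K_{k+1}$ with $\delta(K_{k+1}')\geq k$, and the job reduces to showing $K_{k+1}'=K_{k+1}$. This is trivial: every vertex of $K_{k+1}$ has degree exactly $k$, so if $K_{k+1}'$ is spanning and has $\delta(K_{k+1}')\geq k$, no edge can be deleted, hence $K_{k+1}'=K_{k+1}$.

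For the converse, I would first argue that $m=k+1$ is automatically the minimum value in the set displayed in (1), so the minimality hypothesis of Theorem \ref{kTDm} is free. Indeed, any kTRDS $S$ of $G$ induces a subgraph with $\delta(G[S])\geq k$, which forces $|S|\geq k+1$; therefore $t\geq k+1$ for every $t$ in the set described in (1), and $t=k+1$ (achieved by our given decomposition) is the minimum. Then Theorem \ref{kTDm} yields $\gamma_{\times k,t}^r(G)=k+1$.

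The only step requiring any care is verifying the minimality for the converse, and even that reduces to the standard observation that a $k$-regular (or higher) induced subgraph on $\ell$ vertices needs $\ell\geq k+1$. Everything else is bookkeeping from the theorem, so I do not anticipate any real obstacle. If a more self-contained exposition is desired, one can alternatively prove the corollary directly: take a $\gamma_{\times k,t}^r(G)$-set $S$, use $\delta(G[S])\geq k$ and $|S|=k+1$ to force $G[S]=K_{k+1}$, and then let $F=G[V-S]$, noting $\delta(F)\geq k$ from the restrained condition and that every vertex of $F$ has at least $k$ neighbors in $S=V(K_{k+1})$, which is exactly the definition of $F\circ_k K_{k+1}$.
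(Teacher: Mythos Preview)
Your proposal is correct and matches the paper's approach: the paper states this result as an immediate corollary of Theorem~\ref{kTDm} with no separate proof, and you have correctly supplied the two small details needed for the specialization to $m=k+1$ --- namely that a spanning subgraph of $K_{k+1}$ with minimum degree at least $k$ must be $K_{k+1}$ itself, and that $k+1$ is automatically the minimum of the set in (1) since any kTRDS induces a subgraph of minimum degree at least $k$.
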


\begin{theorem}
\label{Lower1} If $G$ is a graph with minimum degree at least $k$ on $n$
vertices and with $m$ edges, then
\begin{equation*}
\gamma _{\times k,t}^{r}(G)\geq \frac{3n}{2}-\frac{m}{k}.
\end{equation*}
\end{theorem}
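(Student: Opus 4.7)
The plan is to fix a $\gamma_{\times k,t}^{r}(G)$-set $S$ and double-count edge-degrees, splitting the vertex sum $\sum_{v}\deg(v)=2m$ into the contributions from $S$ and from $V-S$ and bounding each piece using the defining conditions of a kTRDS.

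First I would obtain a lower bound on $\sum_{s\in S}\deg(s)$. Since $S$ is a $k$-tuple total dominating set, every vertex $v\in V$ satisfies $|N(v)\cap S|\geq k$. Summing this inequality over all $v\in V$ and switching the order of summation yields
\begin{equation*}
\sum_{s\in S}\deg(s)\;=\;\sum_{v\in V}|N(v)\cap S|\;\geq\;kn.
\end{equation*}

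Next I would bound $\sum_{v\in V-S}\deg(v)$. The restrained condition gives $|N(v)\cap(V-S)|\geq k$ for each $v\in V-S$, and the total domination condition gives $|N(v)\cap S|\geq k$ for the same vertices, so $\deg(v)\geq 2k$ for every $v\in V-S$. Summing produces
\begin{equation*}
\sum_{v\in V-S}\deg(v)\;\geq\;2k\bigl(n-\gamma_{\times k,t}^{r}(G)\bigr).
\end{equation*}

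Finally I would combine these two estimates with the handshake identity. Writing $\gamma=\gamma_{\times k,t}^{r}(G)$,
\begin{equation*}
2m\;=\;\sum_{s\in S}\deg(s)+\sum_{v\in V-S}\deg(v)\;\geq\;kn+2k(n-\gamma)\;=\;3kn-2k\gamma,
\end{equation*}
and solving for $\gamma$ gives $\gamma\geq \tfrac{3n}{2}-\tfrac{m}{k}$, as required. There is no real obstacle here; the only mildly non-obvious move is the swap of summation order in the first step, which is what converts the pointwise kTDS condition into a usable aggregate bound on $\sum_{s\in S}\deg(s)$ rather than the weaker bound $\sum_{s\in S}\deg(s)\geq k|S|$ coming from $\delta(G)\geq k$.
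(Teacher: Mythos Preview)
Your proof is correct and uses essentially the same double-counting idea as the paper: the paper partitions the edge set into $m_1$ (inside $S$), $m_2$ (inside $V-S$), and $m_3$ (across), bounding each by $m_1\geq k\gamma/2$, $m_2\geq k(n-\gamma)/2$, $m_3\geq k(n-\gamma)$ and summing, whereas you partition the degree sum over $S$ and $V-S$ and bound each piece; since $\sum_{s\in S}\deg(s)=2m_1+m_3$ and $\sum_{v\in V-S}\deg(v)=2m_2+m_3$, the two arguments are the same computation organized differently. Your swap-of-summation step is a slightly slicker way to get $2m_1+m_3\geq kn$ in one shot, but there is no substantive difference.
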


\begin{proof}
Let $S$ be a minimum kTRDS of $G=(V,E)$. Since $\delta (G[S])\geq k$, $%
\delta (G[V-S])\geq k$\ and $S$ is kTDS, we have the following inequalities:
\begin{equation*}
\begin{array}{lll}
m_{1} & \geq & \frac{k\gamma _{\times k,t}^{r}(G)}{2} \\
m_{2} & \geq & \frac{k(n-\gamma _{\times k,t}^{r}(G))}{2} \\
m_{3} & \geq & k(n-\gamma _{\times k,t}^{r}(G)),%
\end{array}%
\end{equation*}
where $m_{1}$\ and $m_{2}$\ are respectively the number of edges in
induced subgraphs $G[S]$\ and $G[V-S]$\ and $m_{3}$\ is the number
of edges connecting vertices of $V-S$ to vertices of $S$. By summing
the inequalities, we obtain
\begin{equation*}
m=m_{1}+m_{2}+m_{3}\geq \frac{3kn}{2}-k\gamma _{\times k,t}^{r}(G)),
\end{equation*}
and thus $\gamma _{\times k,t}^{r}(G)\geq \frac{3n}{2}-\frac{m}{k}$.
\end{proof}



\begin{corollary}
\label{Cyman} \cite{CR} If $G$ is a graph without isolated vertex on $n$
vertices and with $m$ edges, then
\begin{equation*}
\gamma _{t}^{r}(G)\geq \frac{3}{2}n-m.
\end{equation*}
\end{corollary}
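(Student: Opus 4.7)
The plan is to derive this corollary directly from Theorem \ref{Lower1} by specializing to the case $k=1$. The hypothesis that $G$ has no isolated vertex is equivalent to $\delta(G) \geq 1$, so the hypothesis of Theorem \ref{Lower1} is satisfied with $k=1$. By the definitions given in the introduction, $1$-tuple total restrained domination coincides with ordinary total restrained domination, i.e.\ $\gamma_{\times 1, t}^{r}(G) = \gamma_{t}^{r}(G)$.

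Applying Theorem \ref{Lower1} with $k=1$ then yields
\begin{equation*}
\gamma_{t}^{r}(G) = \gamma_{\times 1, t}^{r}(G) \geq \frac{3n}{2} - \frac{m}{1} = \frac{3}{2}n - m,
\end{equation*}
which is exactly the claimed inequality. There is essentially no obstacle here: the substance of the argument is already contained in the proof of Theorem \ref{Lower1}, where the three edge counts (inside $S$, inside $V - S$, and between them) are summed using $\delta(G[S]) \geq k$, $\delta(G[V - S]) \geq k$, and the kTDS condition. For $k=1$, those three bounds read $m_1 \geq \gamma_{t}^{r}(G)/2$, $m_2 \geq (n-\gamma_{t}^{r}(G))/2$, and $m_3 \geq n - \gamma_{t}^{r}(G)$, and the same summation gives the corollary.
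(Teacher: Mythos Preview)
Your proposal is correct and matches the paper's approach: the corollary is stated immediately after Theorem~\ref{Lower1} with no separate proof, so it is intended precisely as the specialization $k=1$, using that ``no isolated vertex'' is $\delta(G)\geq 1$ and $\gamma_{\times 1,t}^{r}=\gamma_{t}^{r}$.
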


\begin{theorem}
\label{Upper.a} Let $G$ be a graph with minimum degree at least $k$.
Let $\delta (G)\geq a+k$, for some finite number $a$. If $ \gamma
_{\times k,t}(G)\leq a$, then $\gamma _{\times k,t}^{r}(G)\leq a$.
\end{theorem}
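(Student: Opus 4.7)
The plan is to take a $\gamma_{\times k,t}(G)$-set $S$ of size at most $a$ and show that it is automatically a $k$-tuple total restrained dominating set, which immediately gives the bound $\gamma_{\times k,t}^{r}(G) \leq |S| \leq a$.

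First I would recall that since $S$ is a kTDS, every vertex of $G$ (including every vertex in $V-S$) already has at least $k$ neighbors in $S$. So the only condition left to verify, in order to upgrade $S$ to a kTRDS, is that every vertex $v \in V-S$ has at least $k$ neighbors in $V-S$ as well.

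Next I would set up the key counting step. Fix an arbitrary $v \in V-S$. By hypothesis $\deg_G(v) \geq \delta(G) \geq a+k$. The number of neighbors of $v$ lying in $S$ is bounded above by $|S| \leq \gamma_{\times k,t}(G) \leq a$. Hence
\begin{equation*}
|N(v) \cap (V-S)| \;=\; \deg_G(v) - |N(v) \cap S| \;\geq\; (a+k) - a \;=\; k,
\end{equation*}
which is exactly the restrained condition for $v$.

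The argument is essentially a one-line counting observation rather than a deep structural result, so there is no real obstacle; the only thing to be careful about is making sure both hypotheses ($\delta(G) \geq a+k$ and $|S| \leq a$) are used in the right place, namely when bounding $|N(v)\cap S|$ from above by $|S|$ and $\deg(v)$ from below by $a+k$. Once these are combined, the conclusion $\gamma_{\times k,t}^{r}(G) \leq a$ follows immediately.
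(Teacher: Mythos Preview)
Your proof is correct and follows essentially the same approach as the paper: take a kTDS $S$ with $|S|\leq a$, and for any $v\in V-S$ use $\deg(v)\geq a+k\geq |S|+k$ to conclude $|N(v)\cap(V-S)|\geq k$, so $S$ is already a kTRDS. The only cosmetic difference is that you pick a minimum kTDS while the paper takes an arbitrary kTDS of size at most $a$, which is immaterial.
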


\begin{proof}
Let us consider a kTDS $S$ such that $\mid S\mid \leq a$. For every
$v\in V(G)-S$,
\begin{equation*}
\deg (v)\geq \delta (G)\geq a+k\geq \mid S\mid +k.
\end{equation*}
Therefore $\mid N(v)\cap (V(G)-S)\mid \geq k$, that means $S$ is a kTRDS of $%
G$ and so $\gamma _{\times k,t}^{r}(G)\leq a$.
\end{proof}





\section{some properties of $k$-tuple total restrained domatic number}

In this section we mainly present basic properties of $d_{\times
k,t}^r(G)$ and bounds on the $k$-tuple total restrained domatic
number of a graph.

\begin{theorem}
\label{d.gama<=n} If $G$ is a graph of order $n$ with $\delta
(G)\geq k$, then
\begin{equation*}
\gamma _{\times k,t}^r(G)\cdot d_{\times k,t}^r(G)\leq n.
\end{equation*}
Moreover, if $\gamma _{\times k,t}^r(G)\cdot d_{\times k,t}^r(G)=n$,
then for each
kTRDP $\{V_{1},V_{2},...,V_{d}\}$ of $G$ with $d=d_{\times k,t}^r(G)$, each set $%
V_{i}$ is a $\gamma _{\times k,t}^r(G)$-set.
\end{theorem}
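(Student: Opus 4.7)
The plan is to use the standard counting argument for domatic-type inequalities. Start from a kTRDP $\{V_{1}, V_{2}, \ldots, V_{d}\}$ of $G$ that achieves $d = d_{\times k,t}^{r}(G)$. By the very definition of a kTRDP, every $V_{i}$ is a $k$-tuple total restrained dominating set of $G$, so $\mid V_{i}\mid \geq \gamma_{\times k,t}^{r}(G)$ for each $i$. Since $\{V_{1}, \ldots, V_{d}\}$ is a partition of $V(G)$, summing these $d$ inequalities gives
\begin{equation*}
n = \sum_{i=1}^{d} \mid V_{i}\mid \geq d \cdot \gamma_{\times k,t}^{r}(G) = d_{\times k,t}^{r}(G) \cdot \gamma_{\times k,t}^{r}(G),
\end{equation*}
which is the desired bound. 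The assumption $\delta(G) \geq k$ is only needed to guarantee that kTRDSs exist at all so that $\gamma_{\times k,t}^{r}(G)$ and $d_{\times k,t}^{r}(G)$ are well defined; no further use of it is required here.

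For the equality case, suppose $\gamma_{\times k,t}^{r}(G) \cdot d_{\times k,t}^{r}(G) = n$. Fix any kTRDP $\{V_{1}, \ldots, V_{d}\}$ with $d = d_{\times k,t}^{r}(G)$. We already know $\mid V_{i}\mid \geq \gamma_{\times k,t}^{r}(G)$ for every $i$, while
\begin{equation*}
\sum_{i=1}^{d}\mid V_{i}\mid = n = d \cdot \gamma_{\times k,t}^{r}(G).
\end{equation*}
If some $\mid V_{j}\mid$ strictly exceeded $\gamma_{\times k,t}^{r}(G)$, the sum would exceed $d \cdot \gamma_{\times k,t}^{r}(G) = n$, a contradiction. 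Hence $\mid V_{i}\mid = \gamma_{\times k,t}^{r}(G)$ for every $i$, and since each $V_{i}$ is already a kTRDS of this minimum cardinality, each $V_{i}$ is a $\gamma_{\times k,t}^{r}(G)$-set.

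I do not foresee a genuine obstacle here; the statement is a straightforward pigeonhole/averaging observation once one recalls that the classes of a kTRDP partition $V(G)$ and each class is, by definition, a kTRDS whose size is bounded below by $\gamma_{\times k,t}^{r}(G)$. The only point that deserves a quick sanity check is that the inequality $\gamma_{\times k,t}^{r}(G) \cdot d_{\times k,t}^{r}(G) \leq n$ is meaningful, i.e.\ that $d_{\times k,t}^{r}(G) \geq 1$; this is ensured by $\delta(G) \geq k$, since then $V(G)$ itself is a kTRDS and hence $\{V(G)\}$ is a (trivial) kTRDP.
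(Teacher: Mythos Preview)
Your proof is correct and follows essentially the same counting argument as the paper: take a kTRDP $\{V_1,\dots,V_d\}$ with $d=d_{\times k,t}^r(G)$, use $\mid V_i\mid \geq \gamma_{\times k,t}^r(G)$ for each class, and sum over the partition to get the bound; then read off the equality case from the chain of inequalities. Your added remark on why $\delta(G)\geq k$ is needed (so that the invariants are defined and $d_{\times k,t}^r(G)\geq 1$) is a small clarification the paper leaves implicit, but otherwise the two proofs coincide.
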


\begin{proof}
Let $\{V_{1},V_{2},...,V_{d}\}$ be a kTRDP of $G$ such that
$d=d_{\times k,t}^r(G)$. Then
\begin{equation*}
\begin{array}{lll}
d\cdot \gamma _{\times k,t}^r(G) & = & \sum\limits_{i=1}^{d}\gamma
_{\times k,t}^r(G)
\\
& \leq & \sum\limits_{i=1}^{d}\mid V_{i}\mid \\
& = & n.
\end{array}
\end{equation*}
If $\gamma _{\times k,t}^r(G)\cdot d_{\times k,t}^r(G)=n$, then the
inequality occurring in the proof becomes equality. Hence for the
kTRDP $\{V_{1},V_{2},...,V_{d}\}$ of $G$ and for each $i$, $\mid
V_{i}\mid =\gamma _{\times k,t}^r(G)$. Thus each set $V_{i}$ is a\
$\gamma _{\times k,t}^r(G)$-set.
\end{proof}

An immediate consequence of Theorem \ref{d.gama<=n} and Corollary
\ref{kTD} now follows.

\begin{corollary}
\label{d<=n/k+1} If $G$ is a graph of order $n$ with $\delta (G)\geq
k$, then
\begin{equation*}
d_{\times k,t}^r(G)\leq \frac{n}{k+1},
\end{equation*}
with equality if and only if $G=K_{k+1}$ or $%
G=F\circ _{k}K_{k+1},$ for some graph $F$ with $\delta (F)\geq k$.
\end{corollary}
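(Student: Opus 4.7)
The plan is to obtain the numerical bound by chaining Theorem \ref{d.gama<=n} with a simple lower bound on $\gamma_{\times k,t}^{r}(G)$, and to obtain the equality characterization by invoking Corollary \ref{kTD}.

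First I would observe that $\gamma_{\times k,t}^{r}(G)\geq k+1$ whenever $\delta(G)\geq k$. Indeed, any kTRDS $S$ is in particular a kTDS, so every vertex of $G$---and in particular every $v\in S$---has at least $k$ neighbors inside $S$; picking any such $v$ gives $|S|\geq 1+|N(v)\cap S|\geq k+1$. Combining this with Theorem \ref{d.gama<=n} yields
$$(k+1)\,d_{\times k,t}^{r}(G)\;\leq\;\gamma_{\times k,t}^{r}(G)\,d_{\times k,t}^{r}(G)\;\leq\;n,$$
which rearranges to $d_{\times k,t}^{r}(G)\leq n/(k+1)$.

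For the equality characterization, suppose $d_{\times k,t}^{r}(G)=n/(k+1)$. Then each inequality in the chain above must be tight, so in particular $\gamma_{\times k,t}^{r}(G)=k+1$, and Corollary \ref{kTD} immediately identifies $G$ as either $K_{k+1}$ or $F\circ_{k}K_{k+1}$ for some graph $F$ with $\delta(F)\geq k$. In the converse direction, when $G=K_{k+1}$ the trivial partition $\{V(G)\}$ is a kTRDP (the restrained condition on $V-S=\emptyset$ is vacuous, and $S=V(G)$ is a kTDS since $\delta(G)=k$), hence $d_{\times k,t}^{r}(G)\geq 1=n/(k+1)$, and the upper bound forces equality.

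The main obstacle is the $G=F\circ_{k}K_{k+1}$ case of the converse: matching $d_{\times k,t}^{r}(G)=n/(k+1)$ requires exhibiting a kTRDP of $G$ with $n/(k+1)$ classes, each of size exactly $k+1$, which in turn forces a partition of $V(F)$ into $|V(F)|/(k+1)$ blocks that individually form kTRDSs of $G$. This is not automatic from $\delta(F)\geq k$ alone; it would require either additional structural hypotheses on $F$ or the observation that the equivalence is intended in the weaker sense $\gamma_{\times k,t}^{r}(G)=k+1$ under the moreover-clause of Theorem \ref{d.gama<=n}. I would present the forward direction as above and handle the converse either by restricting to $G=K_{k+1}$ or by appealing implicitly to the existence of such a partition as part of the hypothesis.
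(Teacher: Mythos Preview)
Your approach coincides with the paper's: the corollary is stated there simply as ``an immediate consequence of Theorem~\ref{d.gama<=n} and Corollary~\ref{kTD}'', with no further argument. Your derivation of the bound and of the forward implication of the equality clause is exactly what the paper has in mind.

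The obstacle you flag in the converse direction is genuine and is not a gap in your argument but a defect in the statement itself. Knowing only that $G=F\circ_{k}K_{k+1}$ with $\delta(F)\geq k$ gives $\gamma_{\times k,t}^{r}(G)=k+1$ via Corollary~\ref{kTD}, but it does \emph{not} force $d_{\times k,t}^{r}(G)=n/(k+1)$. A concrete counterexample for $k=1$: take $F=K_{2}$ on $\{a,b\}$, take $K_{2}$ on $\{c,d\}$, and join both $a$ and $b$ only to $c$. Then $G=F\circ_{1}K_{2}$, $\delta(F)=1$, and $\{c,d\}$ is the unique $1$TRDS of size~$2$; its complement $\{a,b\}$ is not even a total dominating set (the vertex $d$ has no neighbour there), so $d_{\times 1,t}^{r}(G)=1<2=n/(k+1)$. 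Thus the ``if'' half of the equality characterization fails as written, and neither of your proposed workarounds can repair it without changing the hypothesis. The honest fix is to weaken the statement to: equality implies $G=K_{k+1}$ or $G=F\circ_{k}K_{k+1}$ with $\delta(F)\geq k$ (equivalently, $\gamma_{\times k,t}^{r}(G)=k+1$); the converse need not hold.
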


For bipartite graphs, we can improve the bound given in Corollary
\ref{d<=n/k+1}, by Proposition \ref{bipartite}.

\begin{corollary}
\label{d<=n/2k} If $G$ is a bipartite graph of order $n$ with vertex
partition $V(G)=X\cup Y$ and $\delta (G)\geq k$, then
\begin{equation*}
d_{\times k,t}^r(G)\leq \frac{n}{2k},
\end{equation*}
with equality if and only there exist two $k$-subsets $S\subseteq X$
and $T\subseteq Y$ such that for each vertex $x\in X$,
$N(x)\supseteq T$, and for each vertex $y\in Y$, $N(y)\supseteq S$
and the minimum degree of the induced subgraph $G[(X-S)\cup (Y-T)]$
is at least $k$.
\end{corollary}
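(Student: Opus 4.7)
The plan is to deduce this corollary by combining Theorem \ref{d.gama<=n} with Proposition \ref{bipartite}. Since $G$ is bipartite with $\delta(G) \geq k$, Proposition \ref{bipartite} gives $\gamma_{\times k,t}^r(G) \geq 2k$, and Theorem \ref{d.gama<=n} gives $\gamma_{\times k,t}^r(G) \cdot d_{\times k,t}^r(G) \leq n$. Chaining these yields
\[
2k \cdot d_{\times k,t}^r(G) \;\leq\; \gamma_{\times k,t}^r(G) \cdot d_{\times k,t}^r(G) \;\leq\; n,
\]
from which the bound $d_{\times k,t}^r(G) \leq n/(2k)$ is immediate.

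For the equality characterization I would argue as follows. First, if $d_{\times k,t}^r(G) = n/(2k)$, then the displayed two-step inequality must be tight throughout; in particular $\gamma_{\times k,t}^r(G) = 2k$, so the equivalence in Proposition \ref{bipartite} directly delivers the required $k$-subsets $S \subseteq X$ and $T \subseteq Y$ with the stated structural properties. Conversely, if such $S$ and $T$ exist, Proposition \ref{bipartite} again gives $\gamma_{\times k,t}^r(G) = 2k$; moreover, the equality clause of Theorem \ref{d.gama<=n} tells us that in any kTRDP realizing $d_{\times k,t}^r(G)$, each class must be a $\gamma_{\times k,t}^r(G)$-set, i.e., of the form described by Proposition \ref{bipartite}. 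The strong structural constraints (each $s \in S$ adjacent to all of $Y$ and each $t \in T$ adjacent to all of $X$, together with $\delta(G[(X-S)\cup(Y-T)]) \geq k$) then let one iteratively peel off size-$2k$ kTRDSs from $X \cup Y$ and assemble a kTRDP with $n/(2k)$ classes.

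The main obstacle is this last constructive step in the converse direction: showing that the iterative extraction of size-$2k$ kTRDSs never gets stuck, i.e., that the hypotheses of Proposition \ref{bipartite} persist in each residual bipartite subgraph $G[(X-S')\cup(Y-T')]$ obtained after removing previously extracted pairs. The upper bound itself is a one-line consequence of the two cited results, so the proof is essentially book-keeping once Theorem \ref{d.gama<=n} and Proposition \ref{bipartite} are invoked.
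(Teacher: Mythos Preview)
Your derivation of the inequality and of the ``only if'' half of the equality clause is correct and matches the paper's intent: the paper states the corollary without proof, as an immediate consequence of Theorem~\ref{d.gama<=n} and Proposition~\ref{bipartite}, and your chaining $2k\cdot d_{\times k,t}^r(G)\le \gamma_{\times k,t}^r(G)\cdot d_{\times k,t}^r(G)\le n$ is exactly that argument.

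The gap you flag in the ``if'' direction is genuine, and in fact unfixable: the stated condition is, by Proposition~\ref{bipartite}, equivalent to $\gamma_{\times k,t}^r(G)=2k$, and this does \emph{not} force $d_{\times k,t}^r(G)=n/(2k)$. For $k=1$, take $X=\{s,a,b,c\}$, $Y=\{t,1,2,3\}$, let $s$ be adjacent to all of $Y$ and $t$ to all of $X$, and add only the matching $a1,\,b2,\,c3$. Then $S=\{s\}$, $T=\{t\}$ satisfy every hypothesis in the corollary, so $\gamma_t^r(G)=2$; but any total dominating set avoiding $t$ must contain $\{1,2,3\}$ (to dominate $a,b,c$), and symmetrically any one avoiding $s$ must contain $\{a,b,c\}$, so every TDS other than those containing $\{s,t\}$ has size at least $4$, whence $d_t^r(G)=d_t(G)=2<4=n/2$. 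Your proposed iterative peeling breaks at the first step here: after removing $\{s,t\}$ the residual bipartite graph is a perfect matching, where no single vertex on either side dominates the other, so the hypotheses of Proposition~\ref{bipartite} fail. Note also that your appeal to the equality clause of Theorem~\ref{d.gama<=n} in the converse direction is circular: that clause \emph{assumes} $\gamma_{\times k,t}^r(G)\cdot d_{\times k,t}^r(G)=n$, which is exactly what you are trying to prove. In short, your proof of the bound and of the ``only if'' implication is sound and coincides with the paper's; the ``if'' implication you identify as the main obstacle is not a recoverable technicality but an actual defect in the stated corollary.
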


Now, we show that the $k$-tuple total restrained domatic number of
every graph is equal to its $k$-tuple total domatic number.

\begin{theorem}
\label{TRD=TD} Let $G$ be a graph with $\delta (G)\geq k\geq 1$. Then $%
d_{\times k,t}^{r}(G)=d_{\times k,t}(G)$.
\end{theorem}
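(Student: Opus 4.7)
The plan is to establish the two inequalities $d_{\times k,t}^{r}(G)\leq d_{\times k,t}(G)$ and $d_{\times k,t}^{r}(G)\geq d_{\times k,t}(G)$ separately. The first is immediate from the definitions: every kTRDS is by construction a kTDS (it satisfies the kTDS condition plus an extra requirement), hence any partition of $V(G)$ into kTRDSs is already a partition into kTDSs. Taking the maximum number of classes over such partitions on each side gives $d_{\times k,t}^{r}(G)\leq d_{\times k,t}(G)$.

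For the reverse inequality, I would start with a $k$-tuple total domatic partition $\{V_1,V_2,\ldots,V_d\}$ realizing $d=d_{\times k,t}(G)$, and show it is automatically a kTRDP. Fix any class $V_i$ and any vertex $x\in V(G)-V_i$. The key observation is that, provided $d\geq 2$, one can choose some index $j\neq i$, and then $V_j$ is itself a kTDS, so $x$ has at least $k$ neighbors in $V_j$. Since $V_j\subseteq V(G)-V_i$, the vertex $x$ has at least $k$ neighbors in $V(G)-V_i$, which is exactly the restrained condition. Thus each $V_i$ is a kTRDS and the partition is a kTRDP, yielding $d_{\times k,t}^{r}(G)\geq d$.

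The only remaining edge case is $d=1$, where the partition is the trivial one $\{V(G)\}$; but then $V(G)-V_1=\emptyset$ and the restrained condition holds vacuously, so $V(G)$ is trivially a kTRDS whenever it is a kTDS (which it is, since $\delta(G)\geq k$ guarantees $d_{\times k,t}(G)\geq 1$). Combining both directions yields the claimed equality.

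There is no serious obstacle here; the argument is essentially a one-line observation that the ``restrained'' condition for a class $V_i$ is automatically supplied by any other class in the partition, so the extra requirement imposed by the restrained variant is free once at least two classes are present. The care needed is only in handling the degenerate case $d=1$ and in verifying the direction of each inequality against the definitions.
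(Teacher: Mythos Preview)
Your proposal is correct and follows essentially the same approach as the paper: both directions are handled identically, with the reverse inequality obtained by observing that in a $k$-tuple total domatic partition each class has its restrained condition supplied by another class. You are in fact slightly more careful than the paper, which tacitly assumes $d\geq 2$ in the reverse direction, whereas you explicitly dispose of the degenerate case $d=1$.
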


\begin{proof}
Each $k$-tuple total restrained dominating set in $G$ is a $k$-tuple total
dominating set in $G$, therefore each $k$-tuple total restrained domatic
partition of $G$ is a $k$-tuple total domatic partition of $G$ and $%
d_{\times k,t}^{r}(G)\leq d_{\times k,t}(G)$. Now let $d=d_{\times
k,t}(G)\geq 2$ and let $\mathbf{D=\{}D_{1},...,D_{d}\}$ be a $k$-tuple total
domatic partition of $G$. Choose $D_{1}$ as an arbitrary class of $\mathbf{D}
$. Let $x\in V(G)$. As $D_{1}$ is a $k$-tuple total dominating set in $G$,
there exists $k$-set $S_{x}^{1}$ such that $%
S_{x}^{1}\subseteq N(x)\cap D_{1}$. Now suppose $x\in V(G)-D_{1}$. Then $%
x\in D_{i}$ for some $2\leq i\leq d$. The set $D_{i}$ is also a $k$-tuple
total dominating set in $G$, therefore there exists $k$-set $%
S_{x}^{i}$ such that $S_{x}^{i}\subseteq N(x)\cap D_{i}$ and
evidently $S_{x}^{i}\subseteq V(G)-D_{1}$, because $D_{1}\cap
D_{i}=\emptyset $. Therefore, we have proved that $D_{1}$ is a
$k$-tuple total restrained dominating set in $G$. The set $D_{1}$
was chosen arbitrarily, therefore $\mathbf{D}$ is a $k$-tuple total
restrained domatic partition of $G$ and $d_{\times k,t}(G)\leq
d_{\times k,t}^{r}(G)$, which together with the former inequality
gives the required result.
\end{proof}

\begin{corollary}
\label{Zel1} \cite{Zel} Let $G$ be a graph without isolated vertices. Then $%
d_{t}^{r}(G)=d_{t}(G)$.
\end{corollary}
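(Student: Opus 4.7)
The plan is to obtain this as an immediate specialization of Theorem \ref{TRD=TD} to the case $k=1$. First I would observe that the hypothesis that $G$ has no isolated vertices is exactly the statement $\delta(G) \geq 1$, which is the hypothesis of Theorem \ref{TRD=TD} when $k=1$. Second, I would note that by the definitions given in the introduction, the $1$-tuple total (restrained) dominating sets coincide with the ordinary total (restrained) dominating sets, and hence $d_{\times 1,t}(G) = d_t(G)$ and $d_{\times 1,t}^r(G) = d_t^r(G)$.

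With these two identifications, Theorem \ref{TRD=TD} applied with $k=1$ directly yields $d_t^r(G) = d_{\times 1,t}^r(G) = d_{\times 1,t}(G) = d_t(G)$, which is the desired equality. Since Theorem \ref{TRD=TD} is already established, there is no real obstacle here; the only thing to check carefully is the translation between the $k$-tuple notation and the classical notation, which amounts to unwrapping the definitions.
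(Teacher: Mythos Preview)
Your proposal is correct and matches the paper's approach exactly: the paper states Corollary~\ref{Zel1} immediately after Theorem~\ref{TRD=TD} without any separate proof, treating it as the $k=1$ specialization. Your observation that ``no isolated vertices'' is precisely $\delta(G)\geq 1$, together with the identification of $1$-tuple total (restrained) domination with ordinary total (restrained) domination, is all that is needed.
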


Now, we give a sufficient condition for $\gamma_{\times
k,t}^r(G)=\gamma_{\times k,t}(G)$.

\begin{theorem}
\label{kTTR=kTT} Let $G$ be a graph with minimum degree at least $k$. If $%
d_{\times k,t}(G)\geq 2$, then
\begin{equation*}
\gamma _{\times k,t}^{r}(G)=\gamma _{\times k,t}(G).
\end{equation*}
\end{theorem}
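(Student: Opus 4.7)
The bound $\gamma_{\times k,t}(G) \leq \gamma_{\times k,t}^{r}(G)$ follows directly from the definitions, since every kTRDS is a kTDS; so the content is the reverse inequality $\gamma_{\times k,t}^{r}(G) \leq \gamma_{\times k,t}(G)$. The strategy I would use is to fix an arbitrary minimum kTDS $S$ with $|S|=\gamma_{\times k,t}(G)$ and argue that $S$ is already a kTRDS, from which the desired inequality is immediate.

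The key structural input from $d_{\times k,t}(G)\geq 2$ is a kTDP $\{D_{1},D_{2}\}$ of $V(G)$; since every vertex has at least $k$ neighbors in each $D_{i}$, this forces $\delta(G)\geq 2k$. Given any $v\in V(G)-S$, I have $|N(v)\cap S|\geq k$ (as $S$ is a kTDS) and $\deg(v)\geq 2k$, so the restrained condition $|N(v)\cap(V(G)-S)|\geq k$ is equivalent to $|N(v)\cap S|\leq \deg(v)-k$. Consequently, the only obstruction to $S$ being a kTRDS would be a vertex $v\in V(G)-S$ with $|N(v)\cap S|\geq k+1$, which combined with $\deg(v)\geq 2k$ forces $|N(v)\cap(V(G)-S)|\leq k-1$.

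The main obstacle, and the heart of the proof, is to rule out such a $v$ using the minimality of $S$. My plan is an exchange argument: if $v\in V(G)-S$ has $|N(v)\cap S|\geq k+1$, then pick $u\in N(v)\cap S$ and try to show that $S\setminus\{u\}$ is still a kTDS, contradicting the minimality of $S$. The removal $S\setminus\{u\}$ is a kTDS precisely when every $w\in N(u)$ has $|N(w)\cap S|\geq k+1$ (so losing one $S$-neighbor leaves at least $k$); thus the delicate combinatorial step is to locate a $u\in N(v)\cap S$ all of whose neighbors have this ``slack.'' The freedom is that $v$ supplies at least $k+1$ candidates for $u$, while the kTDP $\{D_{1},D_{2}\}$ and the bound $\deg(w)\geq 2k$ tightly constrain how many neighbors of $u$ can be ``tight'' (have exactly $k$ neighbors in $S$). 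A pigeonhole-style count splitting $N(v)\cap S$ across $D_{1}$ and $D_{2}$ should then guarantee a suitable choice of $u$, closing the argument.
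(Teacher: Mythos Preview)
Your plan has a genuine gap: you set out to prove that an \emph{arbitrary} minimum kTDS $S$ is already a kTRDS, and this is simply false under the hypothesis. Take $k=1$ and let $G$ be the $5$-cycle on $\{1,2,3,4,5\}$ together with the chord $13$. The partition $\{\{1,5\},\{2,3,4\}\}$ is a total domatic partition, so $d_t(G)\ge 2$; one checks $\gamma_t(G)=2$ and that $S=\{1,3\}$ is a minimum total dominating set. But vertex $2\in V-S$ has $N(2)=\{1,3\}\subseteq S$, so $S$ is \emph{not} a TRDS. Your exchange cannot repair this: $S\setminus\{1\}=\{3\}$ and $S\setminus\{3\}=\{1\}$ are not total dominating sets (a singleton has no neighbour in itself), so there is no $u\in N(2)\cap S$ whose removal preserves the kTDS property, and no pigeonhole count on $D_1,D_2$ can manufacture one. (Incidentally, your sentence ``$|N(v)\cap S|\ge k+1$, which combined with $\deg(v)\ge 2k$ forces $|N(v)\cap(V(G)-S)|\le k-1$'' has the implication backwards; the correct direction is that a \emph{bad} $v$ forces $|N(v)\cap S|\ge \deg(v)-k+1\ge k+1$.) Any valid argument must either select $S$ more carefully or produce a different kTRDS of size $\gamma_{\times k,t}(G)$.

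The paper's route is entirely different and avoids any exchange: having fixed a minimum kTDS $S$, it invokes a second kTDS $S'$ with $S\cap S'=\emptyset$, and then for each $x\in V(G)-S$ simply notes that the $\ge k$ neighbours of $x$ guaranteed by $S'$ all lie in $V(G)-S$, giving the restrained condition in one line. The hypothesis $d_{\times k,t}(G)\ge 2$ is used to supply this disjoint $S'$ directly from the partition, not merely to deduce $\delta(G)\ge 2k$; by extracting only the degree bound you discard precisely the structural information the argument needs.
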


\begin{proof}
Since every $k$-tuple total restrained dominating set in $G$ is also $k$%
-tuple total dominating set in $G$, therefore $\gamma _{\times
k,t}(G)\leq \gamma _{\times k,t}^{r}(G)$. For the converse
inequality, let $S$ be a minimum $k$-tuple total dominating set of
$G$. Since $d_{\times k,t}(G)\geq 2 $, then there exists another
$k$-tuple total dominating set\ $S^{\prime }$ in $G$ which is
disjoint of $S$. Let $x\in V(G)-S$. Then $x$ is adjacent to at least
$k$ vertices of $S^{\prime }$, since $S^{\prime }$ is a $k$-tuple
total dominating set of $G$. This follows that $x$ is adjacent to at least $%
k $ vertices of $V(G)-S$. Therefore, $S$ is a $k$-tuple total
restrained dominating set of $G$ and so $\gamma _{\times
k,t}^{r}(G)\leq \gamma _{\times k,t}(G)$. The previous two
inequalities follow $\gamma _{\times k,t}^{r}(G)=\gamma _{\times
k,t}(G)$.
\end{proof}

\begin{corollary}
\label{kTTRD=kTTD} Let $G$ be a graph without isolated vertex. If $%
d_{t}(G)\geq 2$, then $\gamma _{t}^{r}(G)=\gamma _{t}(G)$.
\end{corollary}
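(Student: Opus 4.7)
The corollary is simply the $k=1$ specialization of Theorem~\ref{kTTR=kTT}, so my plan is to derive it by unpacking the definitions in that case and invoking the theorem directly.

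First I would observe that the hypothesis matches up correctly after substituting $k=1$. A $1$-tuple total (restrained) dominating set is exactly a total (restrained) dominating set by the definitions given in the introduction, so $\gamma_{\times 1,t}(G) = \gamma_t(G)$, $\gamma_{\times 1,t}^{r}(G) = \gamma_t^{r}(G)$, and $d_{\times 1,t}(G) = d_t(G)$. The condition $\delta(G) \geq k = 1$ is equivalent to saying $G$ has no isolated vertex, which is precisely the hypothesis of the corollary. Likewise, the assumption $d_t(G) \geq 2$ translates to $d_{\times 1,t}(G) \geq 2$.

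Next, applying Theorem~\ref{kTTR=kTT} with $k=1$ yields $\gamma_{\times 1,t}^{r}(G) = \gamma_{\times 1,t}(G)$, which by the translations above is exactly $\gamma_t^{r}(G) = \gamma_t(G)$, as required.

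There is no real obstacle here, since the work has already been done in Theorem~\ref{kTTR=kTT}; the only thing to check is that the parameter correspondence is clean, which it is. The one point worth noting explicitly in the write-up is that $\delta(G) \geq 1$ is the correct reformulation of the no-isolated-vertex hypothesis, so the theorem applies without additional assumptions.
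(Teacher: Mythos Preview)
Your proposal is correct and matches the paper's approach exactly: the corollary is stated immediately after Theorem~\ref{kTTR=kTT} with no separate proof, since it is just the $k=1$ case, and your write-up simply makes that specialization explicit. The only content is the observation that ``no isolated vertex'' is $\delta(G)\geq 1$ and that the $1$-tuple total (restrained) parameters coincide with the ordinary total (restrained) ones, which you have noted.
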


The converse of Theorem \ref{kTTR=kTT} does not hold. For example, if $%
G=K_{k+1}$, then $\gamma _{\times k,t}^{r}(G)=\gamma _{\times
k,t}(G)=k+1$\ but $d_{\times k,t}(G)=1$. Also as another example let
$G=K_{n,m}$ be the
complete bipartite graph with this conditions that $k\leq n\leq m<2k$\ and $%
(n,m)\neq (k,k)$. Then $\gamma _{\times k,t}(G)=2k<\gamma _{\times
k,t}^{r}(G)=n+m,$\ but $d_{\times k,t}(G)=1$.


\section{complementary prisms}
First we calculate the $k$-tuple total restrained domination number
of the complementary prism of a regular graph for some integer $k$.

\begin{theorem}
\label{ell.regaular.Compl.} Let $k$ and $\ell $ be integers such
that $1 \leq k-1 \leq \ell \leq 2k-2$. If $G$ is a $\ell $-regular
graph of order $n$, then
\begin{equation*}
\gamma _{\times k,t}^r(G\overline{G})\geq n+k,
\end{equation*}
with equality if and only if $n\geq \ell +2k$ and $V(\overline {G})$
contains a $k$-subset $T$ such that for each vertex $\overline
{i}\in V(\overline {G})$, $\mid N(\overline {i})\cap T \mid \geq
k-1$ and also if $\overline {i}\in V(\overline {G})-T$, then $\mid
N(\overline {i})\cap (V(\overline {G})-T) \mid \geq k$.
\end{theorem}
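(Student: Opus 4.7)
The plan is to establish both the lower bound $\gamma_{\times k,t}^r(G\overline{G}) \geq n+k$ and the equality characterization. Fix any kTRDS $S$ of $G\overline{G}$. The key observation is that each vertex $i \in V(G)$ has degree $\ell+1 \leq 2k-1$ in $G\overline{G}$ (its $\ell$ neighbors in $G$ together with its twin $\overline{i}$), so by Observation~\ref{P2}(i) it must lie in $S$. Hence $V(G) \subseteq S$, and writing $T := S \cap V(\overline{G})$ we have $|S| = n+|T|$, so the lower bound reduces to showing $|T| \geq k$.

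The kTDS condition at an arbitrary $\overline{j} \in V(\overline{G})$, combined with $j \in V(G) \subseteq S$, reduces to
\begin{equation*}
|N_{\overline{G}}(\overline{j}) \cap T| \geq k-1. \qquad (\ast)
\end{equation*}
This already yields $|T| \geq k-1$. I would then push to $|T| \geq k$ by contradiction: if $|T| = k-1$, then for any $\overline{j} \in T$ the simple-graph fact $\overline{j} \notin N_{\overline{G}}(\overline{j})$ forces $|N_{\overline{G}}(\overline{j}) \cap T| \leq |T|-1 = k-2$, violating $(\ast)$. Thus $|S| = n+|T| \geq n+k$.

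For the characterization, suppose first $|S| = n+k$, so $|T| = k$; then $(\ast)$ is precisely the first required property of $T$. For each $\overline{j} \in V(\overline{G})-T$ we have $\overline{j} \in V-S$, and since $j \in S$ the restrained requirement at $\overline{j}$ collapses to $|N_{\overline{G}}(\overline{j}) \cap (V(\overline{G})-T)| \geq k$, the second required property; adding this to $(\ast)$ shows $\overline{j}$ has at least $2k-1$ neighbors in $\overline{G}$, so $n-1-\ell \geq 2k-1$, i.e.\ $n \geq \ell+2k$. Conversely, given such a $T$ and $n \geq \ell+2k$, the set $S := V(G) \cup T$ has size $n+k$, and each defining condition of a kTRDS is verified directly from the hypotheses on $T$ (with $|N_G(i)| = \ell$ handling the kTDS condition at $i \in V(G)$, augmented by $\overline{i}$ via the perfect matching when needed).

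The one genuinely nontrivial step is the push from $|T|\geq k-1$ to $|T|\geq k$: the naive counting stops at $k-1$, and the extra unit comes solely from exploiting that a vertex is not in its own open neighborhood. Everything else is careful bookkeeping of which neighbors of each vertex in $G\overline{G}$ lie in $S$ or in $V-S$, streamlined by the fact that $V(G)\subseteq S$ is forced from the outset.
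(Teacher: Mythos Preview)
Your lower bound and the forward direction of the equality characterization are correct and follow the paper's line: both the paper and you use Observation~\ref{P2}(i) to force $V(G)\subseteq S$, then squeeze $|T|\geq k$ out of the kTDS condition on $V(\overline{G})$. Your contradiction at a vertex $\overline{j}\in T$ (using $\overline{j}\notin N_{\overline{G}}(\overline{j})$) is in fact crisper than the paper's somewhat opaque sentence about a vertex $\overline{t}$ whose ``corresponding vertex $t$ in $G$ is adjacent to some vertex $j_i$''; the underlying idea---that $(\ast)$ applied at one of the $\overline{j_m}$ forces a $k$-th element of $T$---is the same.

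There is, however, a real gap in your converse, and it cannot be repaired without amending the hypothesis, because the stated ``if and only if'' is false at the boundary $\ell=k-1$. You write that the kTDS condition at $i\in V(G)$ is handled by ``$|N_G(i)|=\ell$ \ldots\ augmented by $\overline{i}$ via the perfect matching when needed''; but when $\ell=k-1$ and $\overline{i}\notin T$, the vertex $i$ has only its $\ell=k-1$ $G$-neighbours inside $S=V(G)\cup T$, so $S$ is \emph{not} a kTDS. Concretely, take $k=2$, $\ell=1$, $G=3K_2$ on $\{1,\dots,6\}$ with edges $12,34,56$ (so $n=6\geq \ell+2k=5$). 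Every $i\in V(G)$ has degree~$2$ in $G\overline{G}$, hence by Observation~\ref{P2}(i) both of its neighbours---in particular $\overline{i}$---lie in every kTRDS; thus $\gamma_{\times 2,t}^r(G\overline{G})=12>8=n+k$. Yet $T=\{\overline{1},\overline{3}\}$ satisfies both conditions in the statement, so the right-hand side of the biconditional holds while equality fails. The paper's own proof waves this step away with ``Obviously, it can be seen that\ldots'', so the defect is inherited rather than introduced by you. Your argument is valid precisely when $\ell\geq k$; for $\ell=k-1$ the kTDS condition at $i$ forces $\overline{i}\in T$ for every $i$, whence $T=V(\overline{G})$ and $|T|=n>k$, so equality $\gamma_{\times k,t}^r(G\overline{G})=n+k$ is in fact impossible in that case.
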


\begin{proof}
Let $V(G\overline{G})=V(G)\cup V(\overline{G})$ such that $V(G)=\{i
\mid 1 \leq i \leq n\}$ and $V(\overline{G})=\{\overline{i} \mid 1
\leq i \leq n\}$. Let $n\geq 2k+\ell $, and let $S$ be an arbitrary
kTRDS of $G\overline{G}$. Since each vertex $i$ has degree $\ell
+1\leq 2k-1$, then $V(G)\subseteq S$, by Observation \ref{P2}.i. Let
$\overline {i}\not \in S$. Then $\mid N(\overline {i})\cap
V(\overline {G})\cap S \mid \geq k-1$. If $\mid N(\overline {i})\cap
V(\overline {G})\cap S \mid \geq k$, then we have nothing to prove.
Thus let $N(\overline {i})\cap V(\overline {G})\cap S=\{ \overline
{j_i}\mid 1\leq i \leq k-1\}$. But this follows that there exists at
least one vertex $\overline {t}\in S-\{ \overline {j_i}\mid 1\leq i
\leq k-1\}$ such that its corresponding vertex $t$ in $G$ is
adjacent to some vertex $j_i$, when $1\leq i \leq k-1$. So $\mid
S\mid \geq n+k$, and since $S$ was arbitrary, then $\gamma _{\times
k,t}^r(G\overline{G})\geq n+k$.

Obviously, it can be seen that $\gamma _{\times
k,t}^r(G\overline{G})=n+k$ if and only if $n\geq \ell +2k$ and
$V(\overline {G})$ contains a $k$-subset $T$ such that for each
vertex $\overline {i}\in V(\overline {G})$, $\mid N(\overline
{i})\cap T \mid \geq k-1$ and also if $\overline {i}\in V(\overline
{G})-T$, then $\mid N(\overline {i})\cap (V(\overline {G})-T) \mid
\geq k$.
\end{proof}

Observation \ref{P2}.i follows the next result.

\begin{corollary}
\label{ell.regaular.Compl.2n} Let $k$ and $\ell $ be integers such
that $1 \leq k-1 \leq \ell \leq 2k-2$. If $G$ is a $\ell $-regular
graph of order $n\leq \ell +2k-1$, then
\begin{equation*}
\gamma _{\times k}^r(G\overline{G})=2n.
\end{equation*}
\end{corollary}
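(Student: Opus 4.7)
The plan is to apply Observation~\ref{P2}.i to every vertex of $G\overline{G}$. Since that observation forces every vertex of degree at most $2k-1$ into every kTRDS, it will suffice to check that under the hypotheses $\ell\leq 2k-2$ and $n\leq \ell+2k-1$, \emph{every} vertex of $G\overline{G}$ has degree at most $2k-1$. Once this is established, every kTRDS must contain all of $V(G\overline{G})$, and the conclusion $\gamma_{\times k,t}^r(G\overline{G})=2n$ follows immediately.

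First I would compute the degrees in $G\overline{G}$. A vertex $i\in V(G)$ has $\ell$ neighbors inside the copy of $G$ plus the single matched vertex $\overline{i}$, so $\deg_{G\overline{G}}(i)=\ell+1$. A vertex $\overline{i}\in V(\overline{G})$ has $n-1-\ell$ neighbors inside $\overline{G}$ plus its match $i$, so $\deg_{G\overline{G}}(\overline{i})=n-\ell$.

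Next I would verify the degree upper bound. From $\ell\leq 2k-2$ we obtain
\[
\deg_{G\overline{G}}(i)=\ell+1\leq 2k-1,
\]
and from $n\leq \ell+2k-1$ we obtain
\[
\deg_{G\overline{G}}(\overline{i})=n-\ell\leq 2k-1.
\]
Thus every vertex of $G\overline{G}$ has degree at most $2k-1$. Applying Observation~\ref{P2}.i, every such vertex must lie in every kTRDS of $G\overline{G}$, so the only candidate kTRDS is $V(G\overline{G})$ itself, giving $\gamma_{\times k,t}^r(G\overline{G})\geq 2n$ and hence equality (the full vertex set is trivially a kTRDS since the complement is empty, provided the minimum degree condition $\delta(G\overline{G})\geq k$ holds, which is guaranteed by $\ell\geq k-1$ for the $G$-side and is the implicit standing hypothesis that a kTRDS exists on the $\overline{G}$-side).

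There is essentially no obstacle here; the corollary is a one-line deduction from Observation~\ref{P2}.i once the two degree computations are written out. The only subtlety worth flagging is checking that both types of vertices (those in $V(G)$ and those in $V(\overline{G})$) fall under the hypothesis of Observation~\ref{P2}.i, which is exactly what the two numerical inequalities in the statement were tailored to ensure.
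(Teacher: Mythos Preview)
Your proposal is correct and matches the paper's approach exactly: the paper simply states that the corollary follows from Observation~\ref{P2}.i, and you have spelled out precisely the degree computations ($\deg(i)=\ell+1\leq 2k-1$ and $\deg(\overline{i})=n-\ell\leq 2k-1$) that make that citation work.
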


\begin{corollary}
Let $n\geq 4$. Then
\begin{equation*}
\gamma _{\times 2,t}^{r}(C_{n}\overline{C_{n}})=\left\{
\begin{array}{ll}
2n & \mbox{if }n=4,5, \\
n+2 & \mbox{if }n\geq 6.%
\end{array}%
\right.
\end{equation*}
\end{corollary}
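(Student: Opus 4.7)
The plan is to deduce the corollary directly from Theorem~\ref{ell.regaular.Compl.} and Corollary~\ref{ell.regaular.Compl.2n} applied to $G=C_n$, which is $2$-regular, with $k=2$. With $k=2$ and $\ell=2$, the hypothesis $1\le k-1\le\ell\le 2k-2$ becomes $1\le 1\le 2\le 2$, so both results are available. The threshold in those statements is $\ell+2k=6$, which matches exactly the break point $n=6$ appearing in the claim.

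\medskip

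First I would dispose of the small cases $n=4,5$. Since $n\le \ell+2k-1=5$, Corollary~\ref{ell.regaular.Compl.2n} yields $\gamma_{\times 2,t}^r(C_n\overline{C_n})=2n$ immediately (one may double-check by Observation~\ref{P2}(i): in both $C_4\overline{C_4}$ and $C_5\overline{C_5}$ every vertex has degree at most $2k-1=3$, so every vertex, together with at least $2$ of its neighbors, must belong to every $2$TRDS, forcing $S=V$).

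\medskip

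For $n\ge 6$, the lower bound $\gamma_{\times 2,t}^r(C_n\overline{C_n})\ge n+2$ is the content of Theorem~\ref{ell.regaular.Compl.}. For the matching upper bound, I would apply the equality criterion of that theorem, which requires producing a $2$-subset $T\subseteq V(\overline{C_n})$ so that
\begin{itemize}
\item every $\overline{i}\in V(\overline{C_n})$ satisfies $|N_{\overline{C_n}}(\overline{i})\cap T|\ge k-1=1$, and
\item every $\overline{i}\in V(\overline{C_n})-T$ satisfies $|N_{\overline{C_n}}(\overline{i})\cap(V(\overline{C_n})-T)|\ge k=2$.
\end{itemize}
Recalling that in $\overline{C_n}$ the only non-neighbors of $\overline{i}$ (besides itself) are $\overline{i-1}$ and $\overline{i+1}$, I would take $T=\{\overline{1},\overline{4}\}$. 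Then $\overline{1}\overline{4}\in E(\overline{C_n})$ for $n\ge 6$, and the only indices $i$ whose non-neighbor set $\{i-1,i,i+1\}$ contains both $1$ and $4$ would require $\{0,1,2\}\cap\{3,4,5\}\ne\emptyset$, which fails for $n\ge 6$; hence the first condition holds. For the second condition, each $\overline{i}$ has degree $n-3$ in $\overline{C_n}$, so $|N_{\overline{C_n}}(\overline{i})\cap(V(\overline{C_n})-T)|\ge (n-3)-2=n-5$, which is $\ge 2$ for $n\ge 7$; for $n=6$ a direct check of the four vertices outside $T$ shows that each still has exactly $2$ neighbors in $V(\overline{C_6})-T$.

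\medskip

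The only mildly delicate step is the boundary case $n=6$, where $\overline{C_6}$ is $3$-regular and the counts $n-5=1$ are too crude; the issue is resolved by the explicit verification indicated above. All other steps are direct consequences of Theorem~\ref{ell.regaular.Compl.} and Corollary~\ref{ell.regaular.Compl.2n}.
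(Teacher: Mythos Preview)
Your proof is correct and follows exactly the intended route: the paper states this corollary without proof, as an immediate consequence of Theorem~\ref{ell.regaular.Compl.} and Corollary~\ref{ell.regaular.Compl.2n} with $k=\ell=2$, and you have simply supplied the verification of the equality criterion (the choice $T=\{\overline{1},\overline{4}\}$ and the boundary check at $n=6$) that the paper leaves implicit.
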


The next theorem state lower and upper bounds for $\gamma _{\times
k,t}^r(G\overline{G})$, when $G$ is an arbitrary graph.

\begin{theorem}
\label{FB} If $G$ is a graph of order $n$ with $k\leq min\{\delta
(G),\delta (\overline{G})\}$, then%
\begin{equation*}
\gamma _{\times (k-1),t}^r(G)+\gamma _{\times
(k-1),t}^r(\overline{G})\leq \gamma _{\times
k,t}^r(G\overline{G})\leq \gamma _{\times k,t}^r(G)+\gamma _{\times
k,t}^r(\overline{G}),
\end{equation*}
where $k\geq 2$ in the lower bound and $k\geq 1$ in the upper bound.
\end{theorem}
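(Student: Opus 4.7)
The plan is to prove both inequalities by transferring $k$-tuple total restrained dominating sets between $G\overline{G}$ and its two sides, with the whole argument turning on the fact that each vertex $i\in V(G)$ has exactly one neighbor, namely $\overline{i}$, outside $V(G)$ in $G\overline{G}$, and symmetrically for $\overline{G}$.

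For the upper bound, I would take a $\gamma_{\times k,t}^{r}(G)$-set $S_{1}\subseteq V(G)$ and a $\gamma_{\times k,t}^{r}(\overline{G})$-set $S_{2}\subseteq V(\overline{G})$ and verify that $S:=S_{1}\cup S_{2}$ is a kTRDS of $G\overline{G}$. Every $i\in V(G)$ already has $\lvert N_{G}(i)\cap S_{1}\rvert\geq k$, and these neighbors lie in $S$; the same holds on the $\overline{G}$ side with $S_{2}$. For the restrained condition, any $i\in V(G)\setminus S_{1}$ has $k$ neighbors in $V(G)\setminus S_{1}$, and since $S\cap V(G)=S_{1}$, these neighbors are in $V(G\overline{G})\setminus S$; again the symmetric statement handles $\overline{G}\setminus S_{2}$. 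So the matching edge $i\overline{i}$ is irrelevant here, and $\gamma_{\times k,t}^{r}(G\overline{G})\leq\lvert S\rvert$.

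For the lower bound, take a minimum kTRDS $D$ of $G\overline{G}$ and decompose it as $D_{1}=D\cap V(G)$, $D_{2}=D\cap V(\overline{G})$. I claim each $D_{j}$ is a $(k-1)$-TRDS of the corresponding half. For any $i\in V(G)$, the inequality $\lvert N_{G\overline{G}}(i)\cap D\rvert\geq k$ splits as $\lvert N_{G}(i)\cap D_{1}\rvert+\lvert\{\overline{i}\}\cap D_{2}\rvert\geq k$, and since the second term is at most $1$ we get $\lvert N_{G}(i)\cap D_{1}\rvert\geq k-1$. For $i\in V(G)\setminus D_{1}$, the restrained condition in $G\overline{G}$ gives $\lvert N_{G}(i)\setminus D_{1}\rvert+\lvert\{\overline{i}\}\setminus D_{2}\rvert\geq k$, and again the second term is at most $1$, so $\lvert N_{G}(i)\setminus D_{1}\rvert\geq k-1$. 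Hence $D_{1}$ is a $(k-1)$-TRDS of $G$; symmetrically $D_{2}$ is one of $\overline{G}$. Summing $\lvert D_{1}\rvert\geq\gamma_{\times(k-1),t}^{r}(G)$ and $\lvert D_{2}\rvert\geq\gamma_{\times(k-1),t}^{r}(\overline{G})$ yields the claim.

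The only real subtlety is bookkeeping: one must keep track of the single matching edge $i\overline{i}$ on both sides of the inequality (inside $D$ and outside $D$), which is exactly what produces the drop from $k$ to $k-1$ in the lower bound. The hypothesis $k\geq 2$ for that half is needed so that "$(k-1)$-tuple total restrained dominating set" is defined, and the assumption $k\leq\min\{\delta(G),\delta(\overline{G})\}$ guarantees that both $\gamma_{\times(k-1),t}^{r}$-parameters and both $\gamma_{\times k,t}^{r}$-parameters on $G$ and $\overline{G}$ exist, so the inequalities are not vacuous.
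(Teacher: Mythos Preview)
Your proof is correct and follows essentially the same approach as the paper: take the union of a kTRDS of $G$ and one of $\overline{G}$ for the upper bound, and split a minimum kTRDS of $G\overline{G}$ into its $V(G)$- and $V(\overline{G})$-parts for the lower bound, using that the single matching edge $i\overline{i}$ can contribute at most one to each neighborhood count. In fact, your write-up is more explicit than the paper's, which simply asserts that the two parts $D'$ and $D''$ are $(k-1)$TRDSs without spelling out the $|\{\overline{i}\}\cap D_2|\le 1$ and $|\{\overline{i}\}\setminus D_2|\le 1$ bookkeeping.
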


\begin{proof}
For proving $\gamma _{\times (k-1),t}^r(G)+\gamma _{\times (k-1),t}^r(%
\overline{G})\leq \gamma _{\times k,t}^r(G\overline{G})$, let $k\geq
1$ and let $D$ be a kTRDS of $G\overline{G}$. Since every vertex of
$V(G)$ (resp. $V(\overline{G})$) is adjacent to only one vertex of
$V(\overline{G})$ (resp. $V(G)$), then we have a nontrivial
partition $D=D^{\prime }\cup D^{\prime \prime }$ such that
$D^{\prime }$ is a $(k-1)$TRDS of $G$ and $D^{\prime \prime }$ is a $(k-1)$%
TRDS of $\overline{G}$. Then
\begin{equation*}
\gamma _{\times (k-1),t}^r(G)+\gamma _{\times (k-1),t}^r(\overline{G})\leq
\mid D^{\prime }\mid +\mid D^{\prime \prime }\mid =\mid D\mid =\gamma
_{\times k,t}^r(G\overline{G}).
\end{equation*}%

We now prove $\gamma _{\times k,t}^r(G\overline{G})\leq \gamma
_{\times k,t}^r(G)+\gamma _{\times k,t}^r(\overline{G})$. let $k\geq
1$. Since for every kTRDS $S$ of $G$ and every kTRDS $S^{\prime }$
of $\overline{G}$, the set $S\cup
S^{\prime }$ is a kTRDS of $G\overline{G}$, then%
\begin{equation*}
\gamma _{\times k,t}^r(G\overline{G})\leq \gamma _{\times k,t}^r(G)+\gamma
_{\times k,t}^r(\overline{G}).
\end{equation*}
\end{proof}

In continues, we will determine $\gamma
_{t}^{r}(C_{n}\overline{C_{n}})$, $\gamma _{\times
2,t}^{r}(C_{n}\overline{C_{n}})$ and $\gamma
_{t}^{r}(P_{n}\overline{P_{n}})$.

\begin{proposition}
\label{TDCnCn} Let $n\geq 4$. Then $d_t(C_{n}\overline{C_{n}})\geq 2$.
\end{proposition}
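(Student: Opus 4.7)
The plan is to exhibit an explicit total domatic partition of $C_n\overline{C_n}$ of size $2$, which immediately gives $d_t(C_n\overline{C_n})\geq 2$.

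The natural partition to try is $V_1=V(C_n)$ and $V_2=V(\overline{C_n})$. I will verify that each of these is a total dominating set of $C_n\overline{C_n}$. For $V_1$: any vertex $i\in V(C_n)$ has its two cycle neighbors $i-1,i+1\pmod n$ inside $V_1$, and any vertex $\overline{i}\in V(\overline{C_n})$ has its matching partner $i$ inside $V_1$. For $V_2$: any vertex $i\in V(C_n)$ has its matching partner $\overline{i}$ inside $V_2$, and any vertex $\overline{i}\in V(\overline{C_n})$ has at least one neighbor inside $V(\overline{C_n})$, namely $\overline{i+2}\pmod n$, because for $n\geq 4$ we have $i+2\not\equiv i,i-1,i+1\pmod n$, so $\overline{i+2}$ is adjacent to $\overline{i}$ in $\overline{C_n}$.

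The only delicate point is the smallest case $n=4$, where $\overline{C_4}$ degenerates into a perfect matching $\{\overline{1}\,\overline{3},\overline{2}\,\overline{4}\}$, so each $\overline{i}$ has only the single neighbor $\overline{i+2}$ inside $V_2$; however, this single neighbor still suffices for total domination. Thus the construction works uniformly for all $n\geq 4$, and the main (minor) obstacle is just this degree-$1$ corner case.

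Since $\{V_1,V_2\}$ is a partition of $V(C_n\overline{C_n})$ into two total dominating sets, it is a total domatic partition, whence $d_t(C_n\overline{C_n})\geq 2$.
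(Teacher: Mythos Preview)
Your proof is correct. The bipartition $\{V(C_n),V(\overline{C_n})\}$ is indeed a total domatic partition of $C_n\overline{C_n}$ for every $n\geq 4$, and your verification of the only borderline case $n=4$ (where $\overline{C_4}$ is a perfect matching) is accurate.

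Your approach is genuinely different from the paper's. The paper proceeds by a case analysis on $n\pmod 4$, in each case explicitly writing down two disjoint \emph{minimum} total dominating sets $S$ and $S'$ of $C_n\overline{C_n}$ (and then implicitly enlarging one of them to $V\setminus S$ to obtain a partition). That construction is considerably more laborious, but it yields extra information: the existence of two disjoint $\gamma_t(C_n\overline{C_n})$-sets. Your argument, by contrast, uses the obvious ``copy'' bipartition, which avoids all casework and gives $d_t\geq 2$ in a couple of lines, at the cost of not producing minimum sets. Since the only subsequent use of this proposition in the paper is to invoke the inequality $d_t(C_n\overline{C_n})\geq 2$ in Theorem~\ref{kTTR=kTT}, your shorter route is entirely adequate for that purpose.
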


\begin{proof}
We consider the following four cases.

\textbf{Case 1.} Let $n\equiv 0\mbox{ (mod }4\mbox{)}$. For $n=4$, we choose
$S=\{1,\overline{1},2,\overline{2}\}$ and $S^{\prime }=\{3,\overline{3},4,%
\overline{4}\}$. If $n>4$, then we choose $S=\{1,\overline{1},2,\overline{2}%
\}\cup \{5+4i,6+4i\mid 0\leq i\leq \lceil n/4\rceil -2\}$ and $S^{\prime
}=\{3,\overline{3},4,\overline{4}\}\cup \{7+4i,8+4i\mid 0\leq i\leq \lceil
n/4\rceil -2\}$.

\textbf{Case 2.} Let $n\equiv 1\mbox{ (mod }4\mbox{)}$. For $n=5$, we choose
$S=\{1,\overline{1},4,\overline{4}\}$ and $S^{\prime }=\{2,\overline{2},5,%
\overline{5}\}$ and for $n=9$, we choose $S=\{1,\overline{1},4,\overline{4}%
,7,\overline{7}\}$ and $S^{\prime }=\{2,\overline{2},5,\overline{5},8,%
\overline{8}\}$. If $n>9$, then we choose $S=\{1,\overline{1},4,\overline{4}%
,7,\overline{7}\}\cup \{10+4i,11+4i\mid 0\leq i\leq \lceil n/4\rceil -4\}$
and $S^{\prime }=\{3,\overline{3},6,\overline{6},9,\overline{9}\}\cup
\{12+4i,13+4i\mid 0\leq i\leq \lceil n/4\rceil -4\}$.

\textbf{Case 3.} $n\equiv 2\mbox{ (mod }4\mbox{)}$. For $n=6$, we choose $%
S=\{1,\overline{1},4,\overline{4}\}$ and $S^{\prime }=\{2,\overline{2},5,%
\overline{5}\}$. For $n>6$, we choose $S=\{1,\overline{1},4,\overline{4}%
\}\cup \{7+4i,8+4i\mid 0\leq i\leq \lceil n/4\rceil -3\}$ and $S^{\prime
}=\{3,\overline{3},6,\overline{6}\}\cup \{9+4i,10+4i\mid 0\leq i\leq \lceil
n/4\rceil -3\}$.

\textbf{Case 4.} $n\equiv 3\mbox{ (mod }4\mbox{)}$. For $n=7$, we choose $%
S=\{1,\overline{1},4,\overline{4},\overline{6}\}$ and $S^{\prime }=\{2,%
\overline{2},5,\overline{5},\overline{7}\}$. For $n>7$, we choose $S=\{1,%
\overline{1},4,\overline{4},\overline{n-1}\}\cup \{7+4i,8+4i\mid 0\leq i\leq
\lceil n/4\rceil -3\}$ and $S^{\prime }=\{2,3,\overline{3},6,\overline{6}%
\}\cup \{9+4i,10+4i\mid 0\leq i\leq \lceil n/4\rceil -3\}$.

Since in all cases, $S$ and $S^{\prime }$ are two disjoint $\gamma_t(C_{n}%
\overline{C_{n}})$-sets, then $d_t(C_{n}\overline{C_{n}})\geq 2$.
\end{proof}

Propositions \ref{TCnCn} and \ref{TDCnCn} and Theorem \ref{kTTR=kTT} imply
the next result.

\begin{proposition}
\label{TRCnCn} Let $n\geq 4$. Then%
\begin{equation*}
\gamma _{t}^r(C_{n}\overline{C_{n}})=\left\{
\begin{array}{ll}
2\left\lceil n/4\right\rceil +2 & \mbox{if }n\equiv 0\mbox{ (mod
}4\mbox{)}, \\
2\left\lceil n/4\right\rceil +1 & \mbox{if }n\equiv 3\mbox{ (mod
}4\mbox{)}, \\
2\left\lceil n/4\right\rceil & \mbox{Otherwise. }%
\end{array}%
\right.
\end{equation*}
\end{proposition}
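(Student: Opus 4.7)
The plan is to obtain the proposition as a direct application of Theorem \ref{kTTR=kTT} with $k=1$ to the graph $G = C_n\overline{C_n}$. This theorem requires two hypotheses: that $\delta(G) \geq k = 1$, and that $d_{t}(G) \geq 2$. The first is automatic from the construction of the complementary prism: for $n \geq 4$, each vertex $i$ in the $C_n$ side has degree $3$ (its two cycle neighbors in $C_n$ together with its matched partner $\overline{i}$), while each vertex $\overline{i}$ in the $\overline{C_n}$ side has degree $(n-3)+1 = n-2 \geq 2$. The second hypothesis, $d_{t}(C_n\overline{C_n}) \geq 2$, is exactly the content of Proposition \ref{TDCnCn}, which was just proved by exhibiting, for each residue class of $n$ modulo $4$, two disjoint total dominating sets of the complementary prism.

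Once both hypotheses are in place, Theorem \ref{kTTR=kTT} (with $k=1$) yields the equality $\gamma_{t}^{r}(C_n\overline{C_n}) = \gamma_{t}(C_n\overline{C_n})$. Substituting the closed-form expression for $\gamma_{t}(C_n\overline{C_n})$ from Proposition \ref{TCnCn} then gives precisely the three-case formula in the statement. There is no nontrivial obstacle in this argument: the genuine combinatorial work (the case-by-case construction of two disjoint total dominating sets) has already been carried out in the proof of Proposition \ref{TDCnCn}, and the analytic content (the formula for $\gamma_{t}$) is imported from Proposition \ref{TCnCn}. Thus the proof reduces to citing the three preceding results and observing that the minimum-degree hypothesis is trivially satisfied.
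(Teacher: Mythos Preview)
Your proof is correct and follows exactly the same approach as the paper, which states that the result is implied by Propositions~\ref{TCnCn} and~\ref{TDCnCn} together with Theorem~\ref{kTTR=kTT}. Your explicit verification of the minimum-degree hypothesis is a small additional detail, but otherwise the argument is identical.
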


\begin{proposition}
\label{TRPn} Let $n\geq 4$. Then
\begin{equation*}
\gamma _{t}^{r}(P_{n}\overline P_n)=\left\{
\begin{array}{ll}
2\left\lceil n/4\right\rceil +2 & \mbox{if }n\equiv 0\mbox{ (mod
}4\mbox{)}, \\
2\left\lceil n/4\right\rceil +1 & \mbox{if }n\equiv 3\mbox{ (mod
}4\mbox{)}, \\
2\left\lceil n/4\right\rceil & \mbox{Otherwise. }%
\end{array}%
\right.
\end{equation*}
\end{proposition}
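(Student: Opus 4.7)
The plan mirrors the proof of Proposition \ref{TRCnCn}: establish $d_t(P_n\overline{P_n})\geq 2$, then invoke Corollary \ref{kTTRD=kTTD} to conclude $\gamma_t^r(P_n\overline{P_n}) = \gamma_t(P_n\overline{P_n})$, and finally substitute the value supplied by Proposition \ref{TPnPn}. A short residue-by-residue check confirms that the expression there in the variable $\lceil(n-2)/4\rceil$ equals, case by case, the expression claimed in this proposition in the variable $\lceil n/4\rceil$, so the two formulations agree.

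The main work is to produce, for each residue class of $n$ modulo $4$, a pair of disjoint total dominating sets of $P_n\overline{P_n}$. I would start from the four families of disjoint TDS pairs constructed for $C_n\overline{C_n}$ in Proposition \ref{TDCnCn}. Since $P_n\overline{P_n}$ differs from $C_n\overline{C_n}$ only by the removal of the edge $\{1,n\}$ and the addition of the edge $\{\bar 1,\bar n\}$, the periodic interior blocks $\{5+4i,6+4i\}$ and $\{7+4i,8+4i\}$ transfer verbatim. Only the patches covering the indices $1,2,n-1,n$ (and their primed copies) must be redesigned, because vertices $1$ and $n$ now have degree $2$ in $P_n\overline{P_n}$, with neighborhoods $\{2,\bar 1\}$ and $\{n-1,\bar n\}$ respectively; consequently each of these two pairs must be split between the two sets.

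For each residue class I would write down the modified endpoint patches and verify by direct inspection that the resulting pair of sets are disjoint total dominating sets. For example, for $n=4$ one can take $S=\{2,3,\bar 2,\bar 3\}$ and $S'=\{1,4,\bar 1,\bar 4\}$, while for $n=5$ the cyclic choice $S=\{1,\bar 1,4,\bar 4\}$, $S'=\{2,\bar 2,5,\bar 5\}$ still works because neither set relies on the absent edge $\{1,n\}$. After handling the base cases $n=4,5,6,7$ and extending to larger $n$ within the same residue class by appending the period-$4$ interior pattern, the bound $d_t(P_n\overline{P_n})\geq 2$ follows.

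The main obstacle is the case $n\equiv 3\pmod 4$: here $\gamma_t(P_n\overline{P_n})$ is odd, so the two TDSs cannot be taken symmetric in size, and the endpoint patches at $1$ and at $n$ must absorb the parity mismatch while remaining disjoint, dominating both degree-$2$ boundary vertices, and meshing with the period-$4$ interior. This is only a finite amount of bookkeeping, but coordinating both ends simultaneously is the delicate point.
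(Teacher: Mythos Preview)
Your plan is correct but takes a different route from the paper. The paper does \emph{not} establish $d_t(P_n\overline{P_n})\geq 2$; instead it uses the trivial inequality $\gamma_t(G)\leq\gamma_t^r(G)$ together with Proposition~\ref{TPnPn} for the lower bound, and for the upper bound directly exhibits, for each residue of $n$ modulo~$4$, a single total \emph{restrained} dominating set of $P_n\overline{P_n}$ of cardinality exactly $\gamma_t(P_n\overline{P_n})$. Thus the paper trades your pair of disjoint TDSs for one TRDS of minimum size: one set rather than two, but that set must satisfy the restraint condition and hit the exact cardinality. Your route has the merit of uniformity with the proof of Proposition~\ref{TRCnCn}, and your two TDSs need not be minimum (only disjoint), which gives extra slack in the construction; on the other hand, coordinating two sets at both boundary ends---particularly in the $n\equiv 3\pmod 4$ case you flag---is a bit more bookkeeping than writing down a single set. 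Both approaches require the same residue-by-residue casework and the same reconciliation of $\lceil(n-2)/4\rceil$ with $\lceil n/4\rceil$ that you carried out.
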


\begin{proof}
Proposition \ref{TPnPn}\ with this fact that for every graph $G$,
$\gamma _{\times k,t}(G)\leq \gamma _{\times k,t}^{r}(G)$, follow
that
\begin{equation*}
\gamma _{t}^r(P_{n}\overline{P_{n}})\geq \gamma
_{t}(P_{n}\overline{P_{n}})=\left\{
\begin{array}{ll}
2\left\lceil (n-2)/4\right\rceil +1 & \mbox{if }n\equiv 3\mbox{ (mod
}4\mbox{)}, \\
2\left\lceil (n-2)/4\right\rceil +2 & \mbox{otherwise.}%
\end{array}%
\right.
\end{equation*}
Let $n\equiv 0\mbox{ (mod }4\mbox{)}$. For $n=8$, set $S=\{\overline
1,\overline 8,3,4,5,6\}$ and for $n>8$ set $S=\{\overline
1,\overline {n-6},\overline {n-5},\overline n,n-3,n-2\}\cup
\{3+4i,4+4i \mid 0\leq i \leq \lfloor n/4\rfloor -3\}$. If $n\equiv
1,2,3\mbox{ (mod }4\mbox{)}$, then respectively set $S=\{\overline
1,\overline {n-2},\overline n,n-2\}\cup \{3+4i,4+4i \mid 0\leq i
\leq \lfloor n/4\rfloor -2\}$, $S=\{\overline 1,\overline n\}\cup
\{3+4i,4+4i \mid 0\leq i \leq \lfloor n/4\rfloor -1\}$ and
$S=\{\overline 1,\overline {n-1},\overline n\}\cup \{3+4i,4+4i \mid
0\leq i \leq \lfloor n/4\rfloor -1\}$. Since in all cases, $S$ is a
TRDS of $P_{n}\overline{P_{n}}$ with cardinal $\gamma
_{t}(P_{n}\overline{P_{n}})$, thus we have completed our proof.
\end{proof}


\end{document}